\def\@tocline#1#2#3#4#5#6#7{\relax
  \ifnum #1>\c@tocdepth 
  \else
    \par \addpenalty\@secpenalty\addvspace{#2}%
    \begingroup \hyphenpenalty\@M
    \@ifempty{#4}{%
      \@tempdima\csname r@tocindent\number#1\endcsname\relax
    }{%
      \@tempdima#4\relax
    }%
    \parindent\z@ \leftskip#3\relax \advance\leftskip\@tempdima\relax
    \rightskip\@pnumwidth plus4em \parfillskip-\@pnumwidth
    #5\leavevmode\hskip-\@tempdima
      \ifcase #1
       \or\or \hskip 1em \or \hskip 2em \else \hskip 3em \fi%
      #6\nobreak\relax
    \dotfill\hbox to\@pnumwidth{\@tocpagenum{#7}}\par
    \nobreak
    \endgroup
  \fi}
\newcommand{\arxiv}[1]{\href{http://arxiv.org/abs/#1}{\tt arXiv:\nolinkurl{#1}}}
\theoremstyle{plain}
\newtheorem*{theorem*}{Theorem}
\newtheorem{theorem}{Theorem}[section]
\newtheorem{lemma}[theorem]{Lemma}
\newtheorem{proposition}[theorem]{Proposition}
\newtheorem{lem}[theorem]{Lemma}
\newtheorem{prop}[theorem]{Proposition}
\theoremstyle{definition}
\newtheorem{definition}[theorem]{Definition}
\newtheorem{remark}[theorem]{Remark}
\newtheorem{defn}[theorem]{Definition}
\newtheorem{example}[theorem]{Example}
\renewcommand{\Re}{{\rm Re}\,}
\newcommand{\R}{\mathbb{ R}}
\newcommand{\C}{\mathbb{ C}}
\newcommand{\F}{\mathbb{F}}
\DeclareMathOperator{\Ext}{Ext}
\DeclareMathOperator{\Hom}{Hom}
\DeclareMathOperator{\GL}{GL}
\DeclareMathOperator{\im}{im}
\DeclareMathOperator{\Id}{Id}
\DeclareMathOperator{\Surj}{Surj}
\DeclareMathOperator{\Irr}{Irr}
\newcommand{\g}{\mathfrak{g}}
\newcommand{\om}{{\overline{m}}}
\newcommand{\bfv}{{\bf v}}
\newcommand{\wt}{\text{wt}}
\newcommand{\eps}{\varepsilon}
\newcommand{\f}{f}
\newcommand{\e}{e}
\newcommand{\M}{{\text{M}}}
\begin{document}
\bibliographystyle{amsalpha}

\title[Crystals and modulated graphs]{Quiver varieties and crystals in symmetrizable type via modulated graphs}

\author{Vinoth Nandakumar}
\address{School of Mathematics and Statistics \\  University of Sydney  \\ NSW 2006
Australia}
\email{vinoth.90@gmail.com}

\author{Peter Tingley}
\address{Dept. of Math and Stats \\ Loyola University Chicago \\ 1032 W. Sheridan Rd., Chicago, IL 60660, USA}
\email{ptingley@luc.edu}

\keywords{Crystal, pre-projective algebra, quiver variety, Kac-Moody algebra}
\subjclass[2010]{17B37}

\begin{abstract} 
Kashiwara and Saito have a geometric construction of the infinity crystal for any symmetric Kac-Moody algebra. The underlying set consists of the irreducible components of Lusztig's quiver varieties, which are varieties of nilpotent representations of a pre-projective algebra. We generalize this to symmetrizable Kac-Moody algebras by replacing Lusztig's preprojective algebra with a more general one due to Dlab and Ringel. In non-symmetric types we are forced to work over non-algebraically-closed fields.
\end{abstract}

\maketitle

\tableofcontents

\section{Introduction}
Fix a symmetrizable Kac-Moody algebra $\mathfrak{g}$. Kashiwara's crystal $B(-\infty)$ is a combinatorial object that encodes a lot of information about $\mathfrak{g}$ and its integrable lowest weight representations. It is usually defined using the corresponding quantized universal enveloping algebra, but it can be realized in other ways. In symmetric type, Kashiwara and Saito \cite{KS97} developed a very useful geometric realization. There the underlying set consists of the irreducible components of Lusztig's nilpotent varieties from \cite[\S12]{L91}: the varieties of nilpotent representations of Lusztig's preprojective algebra. This preprojective algebra is only defined in symmetric types, which is why Kashiwara and Saito work in that generality. 

However, even before Lusztig's work, Dlab and Ringel \cite{DR80} defined the preprojective algebra of a ``modulated graph." There is a natural way to associate a symmetrizable Cartan matrix to any modulated graph, and all symmetrizable Cartan matrices arise this way. As discussed in \cite{Ringel:1998}, Lusztig's preprojective algebra is a special case of this construction. 

Here we generalize Kashiwara and Saito's work by replacing Lusztig's preprojective algebra with Dlab and Ringel's. This gives realizations of $B(-\infty)$ for any symmetrizable Kac-Moody algebra. 
Kashiwara and Saito's proof largely goes through, although we make some modifications.

The current work can perhaps be generalized: Dlab and Ringel actually allow division rings where we use fields. It would be natural to try to realize $B(-\infty)$ in this even more general setting, but this involves some technicalities we prefer to avoid. 

There is a well known way to study $B(-\infty)$ in symmetrizable types by ``folding" the quiver variety for a larger symmetric type (see \cite{Sav05}). There the crystal for the symmetrizable Kac-Moody algebra is the set of irreducible components of the symmetric type quiver variety that are fixed set-wise by a diagram automorphism. However, we feel it aesthetically important to have a quiver variety in symmetrizable types that is {\it actually a representation variety} for some algebra. This may also simplify some proofs by allowing the symmetric and symmetrizable cases to be handled simultaneously. 

Some recent papers of Geiss, Leclerc and Schr\"oer  \cite{GLS1, GLS2, GLS3} also discuss preprojective algebras in symmetrizable type. They take a different approach (using quivers with relations) and do not consider crystals.

\section{Background}

\subsection{Crystals} \label{ss:crystals}
Let $\g$ be a symmetrizable Kac-Moody algebra  with Cartan matrix $C= (c_{ij})_{i \in I}$, and let $D= \text{diag} \{ d_{i} \}_{i \in I}$ be such that $DC$ is symmetric, with the $d_i$ relatively prime positive integers. Let $P$ be the weight lattice of $\g$, $Q$ the root lattice, and $\{ \alpha_i \}$ the simple roots. 
Let $\langle \cdot, \cdot \rangle$ be the pairing between the root lattice and the co-root lattice defined by $\langle \alpha_i, \check{\alpha}_j \rangle =c_{ji}$, and $(\cdot, \cdot)$ be the symmetric bilinear form on $Q$ defined by $(\alpha_i, \alpha_j)= d_i c_{ij}$.

\begin{defn}\label{def:crystal} (\cite[\S7.2]{Kashiwara:1995}) A {\bf combinatorial crystal} is a set $B$ along with functions $\wt \colon B \to P$, and, for each $i \in I$, $\varepsilon_i, \varphi_i \colon B \to {\mathbb Z}$ and $e_i, f_i: B \rightarrow B \sqcup \{ \emptyset \}$, such that
  \begin{enumerate}
  \item $\varphi_i(b) = \varepsilon_i(b) + \langle \wt(b), \check\alpha_i \rangle$.
  \item $e_i$ increases $\varphi_i$ by 1, decreases $\eps_i$ by 1 and increases $\wt$ by $\alpha_i$.
  \item  \label{def:dcp3}$f_i b = b'$ if and only if $e_i b' = b$.
  \end{enumerate}
We often denote a combinatorial crystal simply by $B$, suppressing the other data.
\end{defn}

\begin{remark} In \cite{Kashiwara:1995}, $\varepsilon_i$ and $\varphi_i$ are allowed to be $-\infty$. We do not need that case. 
\end{remark}

\begin{defn}
\label{def:lwcrystal}
A combinatorial crystal is called {\bf lowest weight} if it has an element $b_-$ (the lowest weight element) such that 
\begin{enumerate}
\item $b_-$ can be reached from any $b \in B$ by applying a finite sequence of $f_i$. 
\item \label{eq:pta} For all $b \in B$ and all $i \in I$, $\varphi_i(b) = \max \{ n : f_i^n(b) \neq \emptyset \}$.
\end{enumerate}
  \end{defn}
\noindent For lowest weight combinatorial crystals, $\wt(b_-)$ and the $e_i$ determine the rest of the data. 

Here we are concerned with the infinity crystal $B(-\infty)$, which can be thought of as the crystal for $U^+_q(\g)$. We use the following essentially as our definition. It is a rewording of \cite[Proposition 3.2.3]{KS97}, and can be found in \cite[Proposition 1.4]{TW}.  
\begin{prop} \label{prop:comb-characterizaton2}
Fix a set $B$ and functions $e_i,f_i, e_i^*,f_i^*: B \rightarrow B \cup \{ \emptyset\}$. Assume $(B, e_i, f_i)$ and $(B, e_i^*, f_i^*)$ are both lowest weight combinatorial crystals with the same lowest weight element $b_-$, and $\wt(b_-)=0$. Assume further that, for all $i \neq j \in I$ and all $b \in B$,
\begin{enumerate}

\item \label{ccc0} $e_i(b), e_i^*(b) \neq \emptyset$. 

\item \label{ccc1} $e_i^*e_j(b)= e_je_i^*( b)$.

\item \label{ccc2} $\varphi_i(b)+\varphi_i^*(b)- \langle \wt(b),
  \check\alpha_i \rangle\geq0.$ 

\item \label{ccc3} If $\varphi_i(b)+\varphi_i^*(b)- \langle \wt(b), \check\alpha_i \rangle =0$ then $e_i(b) = e_i^*(b)$.

\item \label{ccc4} If $\varphi_i(b)+\varphi_i^*(b)- \langle \wt(b), \check\alpha_i \rangle \geq 1$ then $\varphi_i^*(e_i(b))= \varphi_i^*(b)$ and  $\varphi_i(e^*_i(b))= \varphi_i(b)$.

\item \label{ccc5} If $\varphi_i(b)+\varphi_i^*(b)- \langle \wt(b), \check\alpha_i \rangle \geq 2$ then  $e_i e_i^*(b) = e_i^*e_i(b)$. 

\end{enumerate}
Then $(B, e_i, f_i) \simeq (B, e_i^*, f_i^*) \simeq B(-\infty)$.
\end{prop}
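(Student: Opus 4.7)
The plan is to derive this from Kashiwara--Saito's original characterization of $B(-\infty)$ in \cite[Proposition 3.2.3]{KS97}, of which conditions (i)--(vi) above are a reformulation. First I would verify that $B(-\infty)$ itself satisfies (i)--(vi): both $(B(-\infty), e_i, f_i)$ and the starred version $(B(-\infty), e_i^*, f_i^*)$, built from the anti-involution $*$ on $U^+_q(\g)$, are lowest weight combinatorial crystals with common lowest weight element of weight $0$; condition (i) holds because $e_i$ and $e_i^*$ are defined everywhere on $B(-\infty)$; and (ii)--(vi) come from Kashiwara's analysis of how the $i$-string and $i^*$-string decompositions interact on $B(-\infty)$.

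With existence settled, the uniqueness statement amounts to constructing a bijection $\Psi \colon B \to B(-\infty)$ with $\Psi(b_-) = b_-^\infty$ and $\Psi \circ e_i = e_i \circ \Psi$. Because $B$ is lowest weight, every $b \in B$ has the form $e_{i_1} \cdots e_{i_k}(b_-)$, and by condition (i) the candidate image $e_{i_1} \cdots e_{i_k}(b_-^\infty)$ is always defined on the other side. The whole content is well-definedness: two sequences of $e_i$'s that agree when applied to $b_-$ must also agree on $b_-^\infty$, and conversely. Once $\Psi$ is known to be a bijection, compatibility with $\wt$, $\varepsilon_i$, and $\varphi_i$ is forced by the axioms of a lowest weight combinatorial crystal; the same argument applied to the starred data yields $(B, e_i^*, f_i^*) \simeq B(-\infty)$.

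The main obstacle is precisely this well-definedness: showing that conditions (i)--(vi) force the full set of relations among the $e_i$'s that hold in $B(-\infty)$. I would argue by induction on weight, checking consistency on lifts $e_i(b)$ via a collection of \emph{local moves}. Condition (ii) supplies the commutation of $e_j$ with $e_i^*$ for $i \neq j$, while (iii)--(vi) control the interplay of $e_i$ and $e_i^*$ as a function of the signed quantity $\varphi_i(b) + \varphi_i^*(b) - \langle \wt(b), \check\alpha_i \rangle$: this quantity records whether $e_i$ and $e_i^*$ coincide on $b$, commute past each other, or require a more delicate shuttling. Passing through the $*$-operators via (ii) then lets one translate between any two unstarred expressions for the same element by permitted moves, closing the induction.
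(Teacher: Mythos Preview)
The paper does not give a proof of this proposition at all: it is introduced with the sentence ``It is a rewording of \cite[Proposition 3.2.3]{KS97}, and can be found in \cite[Proposition 1.4]{TW},'' and is then used as a black-box characterization of $B(-\infty)$. So there is no in-paper argument to compare against; the authors are simply quoting a known result.

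As for your sketch itself: the overall shape is reasonable, but the step you flag as the ``main obstacle'' is also where the sketch is thinnest. Conditions (ii)--(vi) only relate $e_i$ to $e_j^*$; they do not directly produce relations among the unstarred $e_i$'s, so it is not clear from your outline how ``passing through the $*$-operators'' converts an arbitrary equality $e_{i_1}\cdots e_{i_k}(b_-)=e_{j_1}\cdots e_{j_\ell}(b_-)$ into a sequence of permitted local moves. The proofs in \cite{KS97} and \cite{TW} avoid this word-problem formulation entirely: they instead show that (i)--(vi) are exactly what is needed to build, for each $i$, a strict crystal embedding $B\hookrightarrow B\otimes B_i$ (with $B_i$ the elementary crystal), and then invoke Kashiwara's uniqueness criterion for $B(-\infty)$ in terms of such embeddings. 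If you want a self-contained argument, that route is considerably cleaner than trying to control all relations among $e_i$-words directly.
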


\subsection{Modulated graphs and preprojective algebras} \label{ss:MGPA-intro}

Modulated graphs (also sometimes called species) date back to work of Gabriel \cite{Gab}. The preprojective algebra construction here is due to Dlab and Ringel \cite{DR80}.

 Fix an undirected graph $\Gamma$ with no edges connecting a vertex to itself or multiple edges. Denote the set of vertices by $I$ and the set of edges by $E$. Let $A$ be the set of directed edges, which we call arrows; so there are two arrows in $A$ for each edge in $E$. Denote the arrow from $i$ to $j$ by ${}_j a_i$. 

A modulated graph $M$ is a graph $\Gamma$ as above along with a choice of a field $\mathbb{F}$ and: 
\begin{itemize}
\item A finite extension $\F_i$ of $\F$ for each vertex $i$ such that $\displaystyle \cap_i \F_i = \F$. 
\item An $(\F_j, \F_i)$ bimodule  ${}_j M_i$ for each ${}_j a_i$ where the two actions of $\F \subset \F_i, \F_j$ agree.
 
\item A non-degenerate $\F_i$-bilinear form $\epsilon_i^j: {}_i M_j \otimes_{\F_j} {}_j M_i \rightarrow \F_i$ for each ${}_j a_i \in A$.
\end{itemize}
The tensor algebra $T_M$ is 
\begin{equation} \label{eq:TM-path}
T_M = \bigoplus_{i_1 i_2 \cdots i_k \text{ a path in } \Gamma} {}_{k} M_{k-1} \otimes_{ \F_{k-1}}  \cdots   \otimes_{ \F_{i_3}}  {}_{i_2} M_{i_2} \otimes_{\F_{i_{2}}}  {}_{i_{2}} M_{i_1},
\end{equation}
with multiplication being tensor product if the end of one path agrees with the beginning of the next, and $0$ otherwise. 

For each ${}_j a_i \in A$, the bilinear form $\epsilon_i^j$ defines a canonical element $r_j^i \in {}_j M_i \otimes_{\F_i} {}_i M_j$: 
\begin{equation} \label{eq:ri}
r_j^i:= \sum_k v_k \otimes v^k
\end{equation}
for any pair of dual $\F_i$ bases $\{ v_k \} \subset {}_j M_i, \{ v^k\} \subset {}_i M_j$. It is well known that this does not depend on the choice of dual bases. Although the two $\F_j$ actions on ${}_j M_i \otimes_{\F_i} {}_i M_j$ need not agree, it is true that $zr_j^i=r_j^i z$ for all $z \in \F_j$.  

For each $i \in I$, define 
\begin{equation}r_i:= \sum_{j: {}_j a_i \in A} r_i^{j}.
\end{equation}

\begin{defn} \label{def:preproj-alg} (\cite[Introduction]{DR80})
The preprojective algebra $\Lambda_M$ is the quotient of $T_M$ by the ideal generated by $\{ r_i\}_{i \in I}$. 
\end{defn}

Associate a symmetrizable Cartan matrix $C=(c_{ij})$ to a modulated graph by
\begin{equation}
c_{ij}= 
\begin{cases}
2 \quad \text{ if } i=j \\
-\dim_{\F_i} {}_i M_j \quad \text{if there is an arrow from $i$ to $j$} \\
0 \qquad \text{otherwise}.
\end{cases}
\end{equation}
 As in \cite[Introduction]{DR80}, $\Lambda_M$ is finite dimensional over $\F$ if and only if $C$ is of finite type. 
 If $C$ is symmetric then, taking $\F_i=\C$ for all $i$ and well chosen bimodules and bilinear forms, one recovers Lusztig's preprojective algebra from \cite{L91} (see \cite{Ringel:1998}). But, even then, different bilinear forms can give non-isomorphic algebras (see \cite[\S6]{Ringel:1998} or \S\ref{ss:CCstrange} below). 
 
  \begin{example} \label{ex:C2-intro}
  Let $\Gamma$ be the graph where $I= \{1,2\}$ and $E$ consists of a single edge joining these vertices. 
Consider the modulated graph with $\F= \F_1=\R$, $\F_2=\C$, ${}_1 M_2={}_2 M_1 =\C$ with the standard actions of $\R$ and $\C$ by multiplication, and bilinear forms
\begin{equation}
\begin{aligned}
\epsilon_1^2: \C \otimes_\C \C & \rightarrow \R \\
z \otimes w & \rightarrow \Re(zw),
\end{aligned}
\qquad
\begin{aligned}
\epsilon_2^1: \C \otimes_\R \C & \rightarrow \C \\
z \otimes w & \rightarrow zw \; .
\end{aligned}
\end{equation}
The corresponding Cartan matrix is of type $C_2$.  Consider the elements of $T_M$:
\begin{itemize}
\item $e_1 = 1 \in \F_1$ and $e_2 = 1 \in \F_2$ in degree $0$. 

\item $\tau = 1 \in {}_2 M_1$ and  $\bar \tau = 1 \in {}_1 M_2$ in degree $1$.

\end{itemize}
The relations defining the preprojective algebra $\Lambda_M$ are
\begin{equation}
\overline{\tau}\tau=0 \quad \text{ and } \quad {\tau} {\overline{\tau}}-i {\tau} {\overline{\tau}}i =0.
\end{equation} 
As a real vector space, 
\begin{equation}
\Lambda_M= \mathbb{R} e_1\oplus  \mathbb{C} e_2\oplus \mathbb{C} \tau \oplus \mathbb{R} \overline \tau  \oplus \mathbb{R} \overline \tau i \oplus \mathbb{C} \tau \overline \tau 
\oplus \mathbb{R}  \overline \tau  i \tau.
\end{equation}
 \end{example}

\subsection{Nilpotent Representation varieties} \label{ss:rvintro}
Fix a modulated graph $M$ and let $\Lambda=\Lambda_M$. There is a natural partition of the identity $e \in \Lambda$ as $e=\sum_i e_i$, where $e_i$ is the lazy path at node $i$. Any $\Lambda$-module $V$ decomposes as a vector space as $V= \bigoplus_i e_i V$, and each $e_i V$ is naturally a vector space over $\F_i$. Given a dimension vector ${\bf v}= (v_i)_{i \in I}$, fix a $v_i$ dimensional vector space $V_i$ over $\F_i$ for each $i$. Define $\Lambda({\bf v})$ to be the variety of representations of $\Lambda$ on $V= \oplus_i V_i$ such that $e_i V= V_i$, the induced vector space structure on $V_i$ agrees with the original vector space structure, and all sufficiently long paths act as $0$. Then 
$\Lambda({\bf v})$ a subset of 
\begin{equation}
\bigoplus_{{}_j a_i \in A} \Hom_{\F_{j}}( {}_jM_i \otimes_{\F_i} V_i, V_j).
\end{equation}
Denote a point in this space by the collection ${}_j x_i \in\Hom_{\F_{j}}( {}_jM_i \otimes_{\F_i} V_i, V_j)$. Then $\Lambda({\bf v})$ is
cut out by the equations stating that 
\begin{itemize}
\item each $r_i$ acts as $0$, and

\item for all $i_1, i_2, \ldots i_n,$ where $n > \dim_{\F} V$ and each $i_k \rightarrow i_{k+1}$ is an arrow in $A$, the composition ${}_{i_n} x_{i_{n-1}} \circ \cdots \circ  {}_{i_2} x_{i_1} :  {}_{i_n} M_{i_{n-1}} \otimes \cdots \otimes  {}_{i_2} M_{i_1} \otimes V_{i_1} \rightarrow V_{i_n}$ is zero. 

\end{itemize}
In particular, $\Lambda({\bf v})$ is the set of $\F$ points of an algebraic variety. 
 
\begin{example} \label{2.7}
Consider the modulated graph from Example \ref{ex:C2-intro}. An element of $\Lambda((1,1))$ is defined by 
\begin{itemize}
\item ${}_2 x_1 \in \Hom_\C ( \C \otimes_\R \R, \C).$ Write this as $(z_1, z_i) \in {\Bbb R}^2$, where ${}_2 x_1(1 \otimes 1)=z_1+z_i i$.

\item  ${}_1 x_2 \in \Hom_\R ( \C \otimes_\C \C, \R)$. Write this as $(w_1, w_i) \in \R^2$, where ${}_1 x_2 (1 \otimes 1)= w_1$ and  ${}_1 x_2 (1 \otimes i)= w_i$. 
\end{itemize}
The conditions that the $r_i$ act as zero become equations as follows:
\begin{equation}
\begin{aligned} & r_1=0 \ \Leftrightarrow \    {}_1 x_2 \circ {}_2 x_1 =0  \ \Leftrightarrow \
{}_1 x_2 \circ {}_2 x_1 (1)=0 
\ \Leftrightarrow  \ w_1z_1+ w_i z_i = 0, 
\end{aligned}
\end{equation}
\begin{equation}
\begin{aligned}
&r_2=0     \Leftrightarrow     {}_2 x_1 \circ {}_1 x_2 - i {}_2 x_1 \circ {}_1 x_2 i  =0    \Leftrightarrow    
z_1w_1+ z_i w_i=0, \; z_i w_1- z_1 w_i=0.
\end{aligned}
\end{equation}
These imply nilpotency, so $\Lambda((1,1))$ is the set of ${\Bbb R}$-points of the algebraic variety cut out by these equations. 
There are two irreducible components defined by $\{w_1=w_i=0 \}$, and by $\{z_1=z_i=0\}$.
The corresponding real algebraic variety would contain a third component defined by $\{z_i^2=-z_1^2, w_1^2=-w_i^2, z_1w_1=-z_iw_i \}$. This contains no new ${\Bbb R}$-points, and if we base change to $\C$ would decompose further into two components:
 $\{ i z_1 = z_i , i w_1 = w_i  \}$ and $\{ -i z_1 =  z_i , -i w_1 = w_i  \}$. It is crucial that we do not include this component. That is, that we work with the space of ${\Bbb R}$-points, not the abstract algebraic variety. 
 \end{example}

\subsection{Topology}
The $\F$ points of an algebraic variety $X$ form a topological space with the Zariski topology: closed sets are locally defined as the zero sets of some polynomials. 
Recall that $X$ is irreducible if it is not the union of any two proper closed subsets. In that case, the dimension of $X$ is the maximal $d$ such that there is a sequence of irreducible subsets 
\begin{equation}
\emptyset \subsetneq X_0 \subsetneq \cdots \subsetneq X_d = X
\end{equation}
If $\F$ is infinite and $X$ is birationally equivalent to $\F^k$ then $\dim X =k$.
If $X$ is reducible, its irreducible components are the irreducible subsets which are not properly contained in larger irreducible subsets. 

The following is well-known and not difficult. For example, the case where $Y$ is irreducible follows from \cite{sp}, since any fiber bundle map is open. The general case is then immediate.

\begin{lemma} \label{lem:top}
If $\pi: X \rightarrow Y$ is a locally trivial fiber bundle with irreducible fiber $F$, then there is a bijection between the irreducible components of $X$ and $Y$. If $Y$ (or equivalently $X$) is irreducible, then $\dim X = \dim Y+ \dim F$.  \qed
\end{lemma}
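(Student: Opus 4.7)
My plan is to first establish that irreducibility transfers back and forth between $Y$ and $X$, then deduce the bijection of irreducible components by restriction, and finally handle dimension via local triviality. The essential tool is that $\pi$ is an open map, because locally it is just the projection $U \times F \to U$.

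First I would handle the irreducible case. Suppose $Y$ is irreducible and $X = X_1 \cup X_2$ with both $X_i$ closed. Each fiber $\pi^{-1}(y) \cong F$ is irreducible, so it lies entirely in $X_1$ or in $X_2$. Setting $Y_i = \{ y \in Y : \pi^{-1}(y) \subset X_i \}$, one has $Y \setminus Y_i = \pi(X \setminus X_i)$, which is open by openness of $\pi$, so each $Y_i$ is closed. Since $Y = Y_1 \cup Y_2$ and $Y$ is irreducible, some $Y_i = Y$, forcing $X = X_i$. Hence $X$ is irreducible. The converse, that $X$ irreducible implies $Y$ irreducible, is immediate from continuity and surjectivity of $\pi$.

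For the bijection in general, let $\{ Y_\alpha \}$ be the irreducible components of $Y$. Restricting $\pi$ to each $\pi^{-1}(Y_\alpha) \to Y_\alpha$ gives a locally trivial fiber bundle with the same fiber $F$ (local trivializations pull back), so by the previous step each $\pi^{-1}(Y_\alpha)$ is irreducible. These are closed in $X$ and cover $X$. For any irreducible $Z \subset X$, the closure $\overline{\pi(Z)}$ is irreducible in $Y$, hence contained in some $Y_\alpha$, so $Z \subset \pi^{-1}(Y_\alpha)$. Thus the $\pi^{-1}(Y_\alpha)$ are precisely the irreducible components of $X$, giving the required bijection.

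For the dimension statement, assuming $Y$ (equivalently $X$) is irreducible, I would choose any nonempty open $U \subset Y$ over which $\pi$ trivializes, so that $\pi^{-1}(U) \cong U \times F$. Since in an irreducible variety the dimension equals that of any nonempty open subset, $\dim X = \dim \pi^{-1}(U) = \dim U + \dim F = \dim Y + \dim F$. There is no substantive obstacle; the one place to be mildly careful is confirming that the openness argument and the ``dimension equals that of a nonempty open subset'' principle apply in the set-of-$\F$-points Zariski topology set up earlier, but both are standard and already implicit in the reference cited by the authors.
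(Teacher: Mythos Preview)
Your argument is correct and follows exactly the approach the paper indicates: the paper does not actually give a proof, but merely remarks that the result is well known, that the irreducible case follows because any fiber bundle map is open (citing \cite{sp}), and that the general case is then immediate. Your proof spells out precisely this, using openness of $\pi$ to transfer irreducibility from $Y$ to $X$, then restricting over the components of $Y$, and finally reading off the dimension from a local trivialization.
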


\section{Realization of $B(-\infty)$}
 
 Fix a modulated graph $\M$ with Cartan matrix $C$ and preprojective algebra $\Lambda$. From now on assume that $|\F|=\infty$. Fix $V= \oplus_{i \in I} V_i$, where each $V_i$ is a vector space over $\F_i$.
Let $\bfv$ be the dimension vector of $V$, and define $\dim V \in Q$ by 
\begin{equation}
\dim V= \sum_{i \in I}v_i  \alpha_i.
\end{equation}
We sometimes abuse notation by e.g. using $(\bfv, \bfv)$ to mean $( \sum_{i \in I}v_i  \alpha_i,  \sum_{i \in I}v_i  \alpha_i)$. 
 
 \subsection{Some spaces and maps.} 
 
 \begin{definition}
For each $i \in I$, set $\displaystyle V^i = \bigoplus_{j : {}_j a_i \in A} {}_i M_j \otimes_{\F_{j} 
}V_{j}$. 
  \end{definition}
  A simple calculation shows 
  \begin{equation} \label{eq:dvi}
\dim_\F V^i=  d_i \dim_{\F_i} V^i = 2 d_i v_i-(\bfv, \alpha_i) = d_i (2v_i- \langle \bfv, \check \alpha_i \rangle).
  \end{equation} 
 \begin{definition}
Recall the canonical element $r_i^j$ from \eqref{eq:ri}. 
For each ${}_j a _i \in A$, define 
 $$
 \begin{aligned}
  {}_j\iota_i: V_i & \rightarrow   {}_i M_j \otimes_{\F_j}  {}_j M_i \otimes_{\F_i} V_i \\
 v &  \rightarrow r_i^j \otimes v \ \ .
 \end{aligned}$$ 
 \end{definition}

 Fix $x = \{ {}_j x_i  \} \in \Lambda({\bf v})$. 
 
 \begin{definition}
For ${}_j a _i \in A$, define 
 $${}_j \tilde x_i = ( \text{id} \otimes {}_j x_i ) \circ {}_j\iota_i : V_i \rightarrow {}_i M_j \otimes V_j.$$
 \end{definition}

 \begin{definition} \label{def:txa}
$\displaystyle \tilde x_i= \bigoplus_{j : {}_j a_i \in A}  {}_j \tilde  x_i : V_i \rightarrow V^i, \quad \text{ and } \quad \displaystyle  {}_i \tilde x = \bigoplus_{j : {}_j a_i \in A}  {}_i x_j: V^i \rightarrow V_i.$
 \end{definition}

 \begin{proposition} \label{prop:are-linear}
 The maps $\tilde x_i$ and ${}_i \tilde x$ are both $\F_i$ linear. 
 \end{proposition}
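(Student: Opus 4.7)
The plan is to handle the two maps separately. For ${}_i\tilde x$ I expect the $\F_i$-linearity to be essentially tautological: by construction it is a direct sum of the structure maps ${}_i x_j \in \Hom_{\F_i}({}_i M_j \otimes_{\F_j} V_j, V_i)$ assembling the representation $x \in \Lambda(\bfv)$, and each such summand is $\F_i$-linear by the very definition of the representation variety. The entire content of the proposition therefore sits in the map $\tilde x_i$.

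For $\tilde x_i$ I would reduce to checking, for each arrow ${}_j a_i \in A$ separately, that ${}_j \tilde x_i$ is $\F_i$-linear; the direct sum will then be $\F_i$-linear automatically. Additivity of ${}_j \tilde x_i$ is immediate from that of its constituent maps, so the substantive question is compatibility with scalar multiplication by some $\alpha \in \F_i$. Unwinding the definition in terms of a pair of $\epsilon_i^j$-dual $\F_i$-bases $\{v_k\} \subset {}_i M_j$ and $\{v^k\} \subset {}_j M_i$, I expect to obtain ${}_j \tilde x_i(v) = \sum_k v_k \otimes {}_j x_i(v^k \otimes v)$, so the task becomes to move $\alpha$ from acting on $v$ to acting on the $v_k$'s on the far side of the tensor product.

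The main obstacle, and really the only nontrivial ingredient, is this transfer of $\alpha$ across the triple tensor ${}_i M_j \otimes_{\F_j} {}_j M_i \otimes_{\F_i} V_i$. Sliding $\alpha$ across $\otimes_{\F_i}$ from $v$ onto the right of the $v^k$'s is free. Crossing $\otimes_{\F_j}$ is not automatic, since $\alpha$ lies in $\F_i$ rather than $\F_j$: here one invokes the identity $\alpha r_i^j = r_i^j \alpha$, which is exactly the commutation property stated just after \eqref{eq:ri} applied to the canonical element $r_i^j$. Once $\alpha$ has been commuted onto the leftmost tensor factor, applying $\text{id} \otimes {}_j x_i$ and recognizing that the $\F_i$-structure on ${}_i M_j \otimes_{\F_j} V_j$ is induced from the left action on ${}_i M_j$ finishes the argument.
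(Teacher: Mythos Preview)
Your approach is essentially the same as the paper's: both observe that ${}_i\tilde x$ is $\F_i$-linear by construction, and both reduce $\F_i$-linearity of $\tilde x_i$ to the commutation $\alpha\, r_i^j = r_i^j\, \alpha$ for $\alpha \in \F_i$, i.e.\ to the fact that ${}_j\iota_i$ intertwines the two $\F_i$-actions. One small slip: the bases expressing $r_i^j = \sum_k v_k \otimes v^k$ are $\epsilon_j^i$-dual $\F_j$-bases (swap the indices in \eqref{eq:ri}), not $\epsilon_i^j$-dual $\F_i$-bases as you wrote; but since you only use the commutation property of $r_i^j$ and not the duality itself, this does not affect your argument.
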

 
\begin{proof}
The map ${}_j\iota_i$ intertwines the left  $\F_i$-module structure on $V_i$ with the left $\F_i$-module structure on $ {}_i M_j$ (because $z r_i^j= r_i^j z$ for all $z \in \F_i$), which immediately implies that each ${}_j \tilde x_i$ is $\F_i$ linear, so $\tilde x_i$ is as well. 
That ${}_i \tilde x$ is $\F_i$ linear is immediate.
\end{proof}

\begin{definition}
For each $i$, let $S_i$ be the simple $\Lambda$-module such that  $e_i S_i = S_i$ and $\dim_{\F_i} e_i S_i= 1$. That is, $S_i$ is a copy of $\F_i$ lying over vertex $i$, and all ${}_j x_i$ are $0$.
\end{definition}

\begin{lemma} \label{lem:kformulas} Fix a representation $V$ of $\Lambda$. Then
$\Hom(S_i,V)$, $\mbox{} \hspace{-0.1cm} \Hom(V, S_i)$,   
$\Ext^1(S_i, V)$ and $\Ext^1(V,S_i)$ are all naturally $\F_i$ vector spaces, with
\begin{itemize}
\item $\Hom(S_i, V) \simeq \ker \tilde x_i$.

\item $\Hom(V, S_i) \simeq (V_i/\im {}_i \tilde x)^*$.
 
\item $\dim_{\F_i} \Ext^1(S_i, V) = \dim_{\F_i} \Ext^1(V, S_i) = \dim_{\F_i} V^i- \dim_{\F_i} \im \tilde x_i- \dim_{\F_i}  \im {}_i \tilde x.$ \end{itemize}
\end{lemma}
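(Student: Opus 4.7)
The Hom statements follow directly. Any $\Lambda$-map $\phi\colon S_i \to V$ is determined by $\phi(1) \in V_i$, and the requirement that every arrow out of $i$ annihilate $\phi(1)$ (since they annihilate $1 \in S_i$) is exactly that $\phi(1) \in \ker \tilde x_i$. Symmetrically, any $\Lambda$-map $\psi\colon V \to S_i$ must vanish on $V_j$ for $j \neq i$, and $\psi|_{V_i}\colon V_i \to \F_i$ must kill each $\im {}_i x_j$ in order to respect arrows into $i$; so $\psi|_{V_i}$ factors through $V_i/\im {}_i \tilde x$, giving $\Hom(V,S_i) \cong (V_i/\im {}_i\tilde x)^*$.

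For the Ext statements I plan to classify extensions directly. Consider an extension $0 \to V \to W \to S_i \to 0$. As an $\F$-vector space choose $W_i = V_i \oplus \F_i \tilde 1$ and $W_j = V_j$ for $j\neq i$. The crucial observation is that the submodule property $V \subset W$ forces each ${}_i y_j\colon {}_i M_j \otimes V_j \to W_i$ (with $j \neq i$) to land in $V_i$, so the only nontrivial new datum is $f_{j,i}\colon {}_j M_i \to V_j$ defined by $f_{j,i}(\nu) = {}_j y_i(\nu \otimes \tilde 1)$. Using the canonical $\F_j$-duality $\Hom_{\F_j}({}_j M_i, V_j) \cong {}_i M_j \otimes_{\F_j} V_j$ provided by $\epsilon_j^i$, this assembles to a vector $w = \sum_{j,k} \mu_k^j \otimes f_{j,i}(\nu_k^j) \in V^i$, where $r_i^j = \sum_k \mu_k^j \otimes \nu_k^j$. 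The relation $r_i \cdot \tilde 1 = 0$, unpacked via the dual-basis identity, becomes exactly ${}_i \tilde x(w) = 0$, and changing the lift $\tilde 1 \mapsto \tilde 1 + u$ for $u \in V_i$ shifts $w$ by $\tilde x_i(u)$. Hence $\Ext^1(S_i, V) \cong \ker {}_i \tilde x/\im \tilde x_i$.

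The treatment of $\Ext^1(V, S_i)$ is parallel. An extension $0 \to S_i \to W \to V \to 0$ with a chosen lift $\tilde V_i \subset W_i$ of $V_i$ has, by the submodule property $S_i \subset W$, no arrow action out of the $\F_i$ summand, and its only new data are the $\F_i$-components $g_{i,j}$ of the maps ${}_i y_j$, assembling to $g \in (V^i)^*$. The relation $r_i \cdot \tilde v = 0$ for $v \in V_i$ imposes $g\circ \tilde x_i = 0$, i.e.\ $g \in (V^i/\im \tilde x_i)^*$; changes of lift $\tilde v \mapsto \tilde v + h(v)\cdot 1$ with $h \in V_i^*$ shift $g$ by $h\circ {}_i \tilde x = ({}_i \tilde x)^T h$. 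Thus $\Ext^1(V, S_i) \cong (V^i/\im \tilde x_i)^*/\im ({}_i \tilde x)^T$, whose $\F_i$-dimension is $\dim V^i - \dim \im \tilde x_i - \dim \im {}_i \tilde x$, matching $\Ext^1(S_i, V)$.

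The main obstacle I anticipate is establishing the submodule/quotient reduction cleanly: without it the naive set of extension data is cut out by a \emph{bilinear} (not linear) constraint in $V^i \times (V^i)^*$ and would not even be a vector space. The identifications via the forms $\epsilon_j^i$, and the basis-free identity $\sum_k \epsilon_j^i(\nu \otimes \mu_k^j)\,\nu_k^j = \nu$ for dual bases, are what convert the single preprojective relation $r_i = 0$ into the two clean linear conditions ${}_i \tilde x(w)=0$ and $g\circ \tilde x_i=0$ that make the Ext groups into the stated quotients.
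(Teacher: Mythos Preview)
Your argument is correct and follows essentially the same route as the paper. The paper treats the $\Hom$ statements as obvious and then computes $\Ext^1(V,S_i)$ exactly as you do: parametrize extensions $0\to S_i\to W\to V\to 0$ by $\F_i$-linear $\phi\colon V^i\to\F_i$ with $\ker\phi\supset\im\tilde x_i$, then identify equivalences with shifts by $\kappa\circ{}_i\tilde x$ for $\kappa\in V_i^*$. For $\Ext^1(S_i,V)$ the paper simply says ``the other is similar'' and does not write it out; your direct computation giving the concrete description $\ker{}_i\tilde x/\im\tilde x_i$ (via the $\epsilon_j^i$-duality $\Hom_{\F_j}({}_jM_i,V_j)\cong{}_iM_j\otimes_{\F_j}V_j$) is a nice addition, and the dual-basis identity you invoke is exactly what makes the definition of $\tilde x_i$ line up with the change-of-lift.

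One remark: the ``obstacle'' you flag in your last paragraph is not really there. In an extension $0\to V\to W\to S_i\to 0$ the inclusion $V\hookrightarrow W$ is a $\Lambda$-map by definition, so arrows ${}_iy_j$ into vertex $i$ automatically land in $V_i$; dually for $0\to S_i\to W\to V\to 0$. There is no bilinear constraint to worry about---the data really is linear from the start.
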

\begin{proof} The first two statements are obvious. For the third, we prove only the statement about $\Ext^1(V, S_i)$, since the other is similar. We seek to classify extensions 
\begin{equation}
0 \rightarrow S_i \xrightarrow{\iota} V' \xrightarrow{f} V \rightarrow 0
\end{equation}
up to equivalence. Clearly $V'_j=V_j$ if $j \neq i$. Choose a vector space splitting $V'_i \simeq V_i \oplus \mathbb{F}_i$, where $\mathbb{F}_i=\im \iota$. An extension is uniquely determined by an $\F_i$ linear map $\phi: V^i \rightarrow \F_i$ subject to the condition that the composition
\begin{equation}
V_i \oplus \mathbb{F}_i \xrightarrow{(\tilde{x}_i,0)} V^i \xrightarrow{(_i \tilde{x}, \phi)} V_i \oplus \mathbb{F}_i
\end{equation}
is 0. This precisely says that $\ker \phi \supset \im \tilde x_i$, so $\phi \in \Hom (V^i/ \text{im } \tilde{x}_i, \F_i)$.

Two maps $\phi$, $\phi'$ give rise to the same class in $\text{Ext}^1(S_i, V)$ if there exists a map 
\begin{equation}
\theta: V_i \oplus \F_i \rightarrow V_i \oplus \F_i
\end{equation}
which is the identity on $\F_i$ and on $(V_i \oplus \F_i)/\F_i$, and such that 
\begin{equation}
({}_i \tilde x,\phi')= \theta \circ ({}_i \tilde x, \phi). 
\end{equation}
Such maps are exactly $(v,x) \rightarrow (v, x+\kappa(v))$
for linear $\kappa: V_i \rightarrow \mathbb{F}_i$, and stabilize the short exact sequence if and only if $\ker \kappa \supset  \im {}_i \tilde{x}$. Thus the orbit of a short exact sequence is parameterized by $\kappa|_{\im {}_i \tilde x}$, and the result follows. 
\end{proof}

\begin{lemma} \label{lem:CBf} For any finite dimensional representation $V$ of $\Lambda$ and any $i \in I$, 
$$\dim \Ext^1(S_i, V)= \dim \Ext^1(V, S_i)= \dim \Hom(S_i, V)+ \dim \Hom(V, S_i) - \langle \dim V, \check\alpha_i \rangle,$$
where dimensions are all over $\F_i$.
\end{lemma}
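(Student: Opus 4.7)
The plan is to simply combine the three explicit formulas in Lemma \ref{lem:kformulas} with the dimension computation \eqref{eq:dvi}. Since \eqref{eq:dvi} gives exactly
\[
\dim_{\F_i} V^i = 2v_i - \langle \dim V, \check\alpha_i \rangle,
\]
the required identity reduces to an arithmetic rearrangement of what is already in Lemma \ref{lem:kformulas}. So no new ideas are needed — the main thing is to assemble the pieces correctly.

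First, I would unpack $\dim \Hom(S_i, V)$ and $\dim \Hom(V, S_i)$. The first bullet of Lemma \ref{lem:kformulas} gives $\dim_{\F_i} \Hom(S_i, V) = \dim_{\F_i} \ker \tilde x_i$, and since $\tilde x_i : V_i \to V^i$ is $\F_i$-linear (Proposition \ref{prop:are-linear}), the rank-nullity theorem over $\F_i$ yields $\dim_{\F_i} \ker \tilde x_i = v_i - \dim_{\F_i} \im \tilde x_i$. Similarly, the second bullet gives $\dim_{\F_i} \Hom(V, S_i) = \dim_{\F_i}(V_i / \im {}_i \tilde x) = v_i - \dim_{\F_i} \im {}_i \tilde x$. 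Adding these yields
\[
\dim \Hom(S_i, V) + \dim \Hom(V, S_i) = 2 v_i - \dim_{\F_i} \im \tilde x_i - \dim_{\F_i} \im {}_i \tilde x.
\]

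Next, subtract $\langle \dim V, \check\alpha_i \rangle$ from both sides and apply \eqref{eq:dvi} to rewrite $2v_i - \langle \dim V, \check\alpha_i \rangle$ as $\dim_{\F_i} V^i$. This gives
\[
\dim \Hom(S_i, V) + \dim \Hom(V, S_i) - \langle \dim V, \check\alpha_i \rangle = \dim_{\F_i} V^i - \dim_{\F_i} \im \tilde x_i - \dim_{\F_i} \im {}_i \tilde x,
\]
and the right side is exactly $\dim \Ext^1(S_i, V) = \dim \Ext^1(V, S_i)$ by the third bullet of Lemma \ref{lem:kformulas}.

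There is no real obstacle here: the substantive input (the $\Ext^1$ computation via classifying extensions up to equivalence) has already been done in Lemma \ref{lem:kformulas}, and the formula for $\dim_{\F_i} V^i$ in \eqref{eq:dvi} is a direct calculation using $\dim_{\F_i} {}_i M_j = -c_{ij}$. The only thing to watch is that all dimensions are being measured over $\F_i$, which is consistent throughout since $V_i$ and $V^i$ are naturally $\F_i$-vector spaces and the relevant maps are $\F_i$-linear by Proposition \ref{prop:are-linear}.
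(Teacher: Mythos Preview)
Your proof is correct and follows essentially the same approach as the paper: both combine the three formulas of Lemma~\ref{lem:kformulas} with rank--nullity for $\tilde x_i$ and the identity \eqref{eq:dvi}. The only cosmetic difference is that the paper computes $\dim\Ext^1 - \dim\Hom - \dim\Hom + \langle \dim V,\check\alpha_i\rangle$ and shows it vanishes, whereas you compute $\dim\Hom + \dim\Hom - \langle \dim V,\check\alpha_i\rangle$ and match it to the $\Ext^1$ formula directly.
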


\begin{proof} By Lemma \ref{lem:kformulas} it is enough to consider $ \text{Ext}^1(S_i, V)$, and we have
\begin{equation}
\begin{aligned} &\text{\text{dim} Ext}^1(S_i, V) - \text{\text{dim} Hom}(S_i, V) - \text{\text{dim} Hom}(V, S_i) + \langle \text{\text{dim}} V, \check\alpha_i \rangle \\  
 =  &  \text{\text{dim}} (V^i / \text{im}(\tilde{x}_i)) \hspace{-0.03cm} - \hspace{-0.03cm} \text{dim}(\text{im}(_i \tilde{x})) - \text{\text{dim}} (\text{ker}(\tilde{x}_i)) \hspace{-0.03cm} - \hspace{-0.03cm} \text{\text{dim}}(V_i /\text{im}(_i \tilde{x})) \hspace{-0.03cm} + \hspace{-0.03cm} \langle \text{\text{dim}} V, \check\alpha_i \rangle  \\ 
=  & \dim V^i  - \text{dim}(\text{im}(\tilde{x}_i))  - \text{dim}(\text{ker} (\tilde{x}_i))- \dim V_i + \langle \text{\text{dim}} V, \check\alpha_i \rangle \\ 
=  & \dim V^i - 2 \dim V_i + \langle \text{\text{dim}}V, \check \alpha_i \rangle = 0. 
\end{aligned}
\end{equation} 
The last equality uses \eqref{eq:dvi}.
\end{proof}

\begin{remark}
For Lusztig's preprojective algebra, Lemma \ref{lem:CBf} still holds if $S_i$ is replaced by an arbitrary finite dimensional module $W$ (see \cite[Lemma 1]{C-B}). However, for Dlab and Ringel's preprojective algebras, this is not true (see \S\ref{ss:CCstrange}).
\end{remark}

\subsection{Relations between components} \label{ss:rel}
For each $i \in I$, 
define $\varphi_i: \Lambda(\bfv) \mapsto {\Bbb Z}_{\geq 0}$ by 
\begin{equation}
\varphi_i(x) = \dim_{\F_i} \ker \tilde x_i.
\end{equation}
Let $\Lambda(\bfv)_{i;k}$ be the subset of $\Lambda(\bfv)$ where $\varphi_i$ takes the value $k$. For each $k$ this is an open subset of a closed subset of $\Lambda(\bfv)$, which is to say $\varphi_i$ is constructible. 

Fix ${\bfv}$ and $k$, and let $\bar {\bfv}= \bfv- k \alpha_i$. Fix vector spaces $V, \bar V$ of graded dimensions $\bfv, \bar \bfv,$ such that $\bar V_j=V_j$ for all $j \neq i$. Let $\Lambda(\bfv;i;k)$ be the variety whose points consist of an element of $\Lambda(\bfv)_{i,k}$ along with a short exact sequence 
\begin{equation}
0 \rightarrow \F_i^k  \rightarrow V  \rightarrow \bar V \rightarrow 0
\end{equation}
which is trivial on $V_j$ for all $j \neq i$. 
Explicitly, 
$\Lambda(\bfv; i; k)$ is the set of triples $(x,p,q)$ where $x \in \Lambda(V)_{i;k}$, $p: V _i \mapsto \bar V_i$ is surjective, $q:  \F_i^k \mapsto V $ is injective, and $\ker p = \im q= \ker x_i$.
Consider the obvious projections
\begin{equation} \label{eq:3pis} \Lambda(\overline v)_{i;0} \overset{\pi_3}{\longleftarrow} \Lambda(\bar v)_{i;0} \times \Surj (V_i, \bar V_i) \overset{\pi_2}{\longleftarrow} \Lambda(\bfv; i; k) \overset{\pi_1}{\longrightarrow} \Lambda(v)_{i;k}, \end{equation}
where $\Surj(V_i, \bar V_i) $ is the space of surjective linear maps. The following generalizes \cite[Lemma 5.2.3]{KS97} (see also \cite[\S 12]{L91}). 

\begin{prop} \label{prop:fb}
$\pi_1$ is a locally trivial fiber bundle with fibers isomorphic to 
$$\GL(\bar V_i) \times \GL(\F_i^k).$$ 
$\pi_2$ is a locally trivial fiber bundle with fibers isomorphic to 
$$ \GL(\F_i^k) \times \Hom ( \F_i^{\dim V^i - \dim \overline V_i}, \F_i^k).$$ 
In particular, this gives a bijection between $\Irr \; \Lambda(\overline v)_{i;0}$ and $\Irr \; \Lambda(v)_{i;k}$.
\end{prop}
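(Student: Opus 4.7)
The plan is to analyze the fibers of $\pi_1$ and $\pi_2$ directly, verify local triviality, and conclude with Lemma \ref{lem:top}. The argument parallels \cite[Lemma 5.2.3]{KS97}; the main new point is checking that the preprojective relation at vertex $i$ cuts out the expected affine space of lifts over an arbitrary infinite field $\F$.

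\emph{The bundle $\pi_1$.} Over $x\in\Lambda(\bfv)_{i;k}$, a fiber point is a pair $(p,q)$ with $\ker p=\im q=\ker\tilde x_i$. Because $\dim_{\F_i}\ker\tilde x_i=k$ is constant on $\Lambda(\bfv)_{i;k}$, specifying $p$ is equivalent to choosing an $\F_i$-linear isomorphism $V_i/\ker\tilde x_i\xrightarrow{\sim}\bar V_i$ (a $\GL(\bar V_i)$-torsor), and specifying $q$ is equivalent to choosing an isomorphism $\F_i^k\xrightarrow{\sim}\ker\tilde x_i$ (a $\GL(\F_i^k)$-torsor). Local triviality follows from the constant-rank structure of $\ker\tilde x_i$, which makes it an algebraic sub-bundle and admits local algebraic bases.

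\emph{The bundle $\pi_2$.} Fix $(\bar x,p)\in\Lambda(\bar\bfv)_{i;0}\times\Surj(V_i,\bar V_i)$. The choice of $q$ with $\im q=\ker p$ is a $\GL(\F_i^k)$-torsor. For the lift $x$: requiring $0\to S_i^k\to V\to\bar V\to 0$ to be an exact sequence of $\Lambda$-modules whose $i$-component quotient is $p$ (identity on $V_j$ for $j\neq i$) forces ${}_jx_i={}_j\bar x_i\circ(1\otimes p)$ for $j\neq i$, so $\tilde x_i=\bar{\tilde x}_i\circ p$ is determined, while each ${}_ix_j$ must be a lift of ${}_i\bar x_j$ along $p$. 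Collectively, ${}_i\tilde x\colon V^i\to V_i$ is a lift of ${}_i\bar{\tilde x}\colon V^i\to\bar V_i$. The relation ${}_i\tilde x\circ\tilde x_i=0$ becomes, by surjectivity of $p$, ${}_i\tilde x\circ\bar{\tilde x}_i=0$; composing with $p$ gives the relation for $\bar x$, which already holds, so the only residual constraint is that the $\ker p$-valued component of ${}_i\tilde x\circ\bar{\tilde x}_i$ vanishes. Injectivity of $\bar{\tilde x}_i$ (which follows from $\varphi_i(\bar x)=0$) then identifies the space of admissible lifts with an affine space modeled on $\Hom_{\F_i}(V^i/\im\bar{\tilde x}_i,\ker p)\cong\Hom(\F_i^{\dim V^i-\dim\bar V_i},\F_i^k)$. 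The preprojective relations at vertices $j\neq i$ and nilpotency are inherited from $\bar x$. Local triviality follows from algebraic local sections of $p$ and of the affine bundle of lifts.

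\emph{Conclusion.} Both fibers $\GL(\bar V_i)\times\GL(\F_i^k)$ and $\GL(\F_i^k)\times\Hom(\F_i^{\dim V^i-\dim\bar V_i},\F_i^k)$ are irreducible over the infinite field $\F$, so Lemma \ref{lem:top} yields bijections
$$\Irr\Lambda(\bfv)_{i;k}\;\longleftrightarrow\;\Irr\Lambda(\bfv;i;k)\;\longleftrightarrow\;\Irr\bigl(\Lambda(\bar\bfv)_{i;0}\times\Surj(V_i,\bar V_i)\bigr).$$
The trivial projection $\pi_3$ has fiber $\Surj(V_i,\bar V_i)$, a nonempty Zariski-open subset of $\Hom_{\F_i}(V_i,\bar V_i)$ and hence irreducible, so the right-hand side further identifies with $\Irr\Lambda(\bar\bfv)_{i;0}$. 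The main obstacle is confirming that the vertex-$i$ relation exactly cuts the lifts down to the claimed affine space, which is where injectivity of $\bar{\tilde x}_i$ is decisive.
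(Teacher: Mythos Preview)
Your proof is correct and follows essentially the same approach as the paper's. The paper writes down explicit local trivializations (fixing a splitting $\psi:\bar V_i\oplus\F_i^k\to V_i$ and an injection $\iota:\F_i^r\hookrightarrow V^i$, then describing the trivializing maps on the open loci where $W\cap\ker\tilde x_i=0$, respectively $\im\iota\cap\im\tilde x_i=0$), whereas you argue more conceptually via constant-rank kernels and the torsor/affine-space description of the fibers; but the substantive content---in particular that the vertex-$i$ relation forces the $\ker p$-component of ${}_i\tilde x$ to vanish on $\im\bar{\tilde x}_i$, and that injectivity of $\bar{\tilde x}_i$ then gives $\dim V^i-\dim\bar V_i$---is identical to the paper's Remark preceding the proof and its subsequent construction.
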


\begin{remark}
The $\Hom ( \F_i^{\dim V^i - \dim \overline V_i}, \F_i^k)$ in the fiber of $\pi_2$ is because, to extend an element of $\Lambda(\bar V)$ to an element of $\Lambda(V)$, where $V= \bar V \oplus \F_i^k$, one must choose a map from $\bar V^i$ to $F_i^k$, subject to the condition that its kernel contains $\im \tilde x_i$.  
\end{remark}

\begin{proof}[Proof of Proposition \ref{prop:fb}]
Let $r=\dim V^i- \dim \bar V_i$. Fix the following data:
\begin{itemize}

\item An isomorphism $\psi: \overline{V}_i \oplus \mathbb{F}_i^k \rightarrow V_i$. Let $W = \psi (\overline{V}_i) \subset V_i$, and let $\psi_1: \overline{V}_i \rightarrow W$ be the resulting isomorphism. Consider also the isomorphism $m: \F_i^k \rightarrow V_i/W$ and the projection map $\pi: V_i \rightarrow V_i/W$.  

\item An injective map  $\iota: \F_i^r \hookrightarrow V^i$.
\end{itemize}
For all $x$ in some open dense subset of $\Lambda(\bfv; i; k)$, $V_i= W \oplus \ker x_i$. Let  
\begin{equation} \pi_{\ker x_i}: V_i \rightarrow \ker x_i, \;\; \text{ and } \;\; \pi_W: V_i \rightarrow W
\end{equation}
be the corresponding projections. 
Notice that $\pi_{\ker x_i}$ is well defined on $V_i/W$, and 
\begin{equation}
\pi_{\ker x_i} \circ \pi |_{\ker x_i} = \Id, \;\; \;  \pi \circ \pi_{\ker x_i} = \Id_{V_i/W}.
\end{equation}

First consider $\pi_1$. The map
\begin{equation}
(x, p, q) \rightarrow  (x, (p \circ \psi_1, m^{-1} \circ  \pi \circ q))
\end{equation} 
is the required local isomorphism on the open set where $W \cap \ker x_i=0$, with inverse 
\begin{equation}
(x,(a, b)) \rightarrow (x, a\circ \psi_1^{-1} \circ  \pi_W, \pi_{\ker x_i} \circ m \circ b).
\end{equation}

Now consider $\pi_2$. The local isomorphisms are:
\begin{equation}
\begin{aligned}
& (x, p, q) \rightarrow \left( \left(\bar x,  p \right), \left(m^{-1} \circ \pi \circ q,  m^{-1} \circ  \pi  \circ  {}_{i} x \circ \iota\right) \right) \\
& ((\bar x, p), (r, \gamma)) \rightarrow (x, p, \pi_{\ker x_i} \circ m \circ \; r),
\end{aligned}
\end{equation}
where $x$ is the extension of $ \bar x$ to $\bar V \oplus  \F_i^k$ defined by $\gamma$, thought of as an element of $\Lambda(V)$ using the isomorphism $\psi$. 
These are defined on the open subset of  $\Lambda(\bar v)_{i;0} \times \Surj(V_i, \bar V_i)$ where $\text{ker}(p) \oplus W = V_i$ and $\text{im}(\iota) \cap \text{im}(\tilde{x}_i) = 0$. 

Since $|\F|=\infty$ the fibers are irreducible and so Lemma \ref{lem:top} gives the required bijection.
\end{proof}

\begin{definition}
Let $\displaystyle D(\bfv) =  \sum_{i \in I} d_i v_i^2.$
\end{definition}

\begin{lem} \label{lem:dbc}
$\Lambda(\bfv)$ has pure dimension $D(\bfv)-\frac{1}{2}( \bfv, \bfv )$ over $\F$. Each $\Lambda(\bfv)_{i;k}$ is also pure dimensional of this dimension, and there is a bijection $\Irr \;\Lambda(\bfv) \mapsto \coprod_k \Irr \;  \Lambda(\bfv)_{i;k}$ which takes $X$ to $X \cap \Lambda(\bfv)_{i;k}$ for the unique $k$ for which this is dense in $X$. 
\end{lem}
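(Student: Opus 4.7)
The plan is to proceed by induction on $|\bfv|:=\sum_i v_i$, with trivial base case $\bfv=0$. Set $d:=D(\bfv)-\tfrac{1}{2}(\bfv,\bfv)$. A useful reduction to keep in mind: once $\Lambda(\bfv)$ itself is known to be pure of dimension $d$, the rest is essentially formal, because the open stratum $\Lambda(\bfv)_{i;0}$ inherits purity, the strata $\Lambda(\bfv)_{i;k}$ with $k\geq 1$ will be controlled by Proposition \ref{prop:fb}, and the bijection then follows from $\Lambda(\bfv)=\coprod_k\Lambda(\bfv)_{i;k}$ being a finite disjoint union of locally closed subsets each pure of the same dimension $d$.

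For the strata with $k\geq 1$, set $\bar\bfv:=\bfv-k\alpha_i$ and $e:=D(\bar\bfv)-\tfrac{1}{2}(\bar\bfv,\bar\bfv)$. The inductive hypothesis gives that $\Lambda(\bar\bfv)$ is pure of dimension $e$, so the open subset $\Lambda(\bar\bfv)_{i;0}$ is (when nonempty) also pure of dimension $e$, with irreducible components the nonempty intersections $X\cap\Lambda(\bar\bfv)_{i;0}$ for $X\in\Irr\Lambda(\bar\bfv)$. Applying Lemma \ref{lem:top} to the two fiber bundles of Proposition \ref{prop:fb} in \eqref{eq:3pis} and using $\dim_{\F_i}V^i=2v_i-\langle\bfv,\check\alpha_i\rangle$ from \eqref{eq:dvi}, I would deduce
\[
\dim\Lambda(\bfv)_{i;k}\;=\;\dim\Lambda(\bar\bfv)_{i;0}+d_ik\bigl(2v_i-\langle\bfv,\check\alpha_i\rangle\bigr).
\]
A short bookkeeping using $(\alpha_i,\alpha_i)=2d_i$ and $(\alpha_i,\bfv)=d_i\langle\bfv,\check\alpha_i\rangle$ shows that $d-e$ equals exactly this correction term. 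Hence $\Lambda(\bfv)_{i;k}$ is pure of dimension $d$, and Proposition \ref{prop:fb} provides the bijection between its irreducible components and those of $\Lambda(\bar\bfv)_{i;0}$.

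The main obstacle is the $k=0$ stratum $\Lambda(\bfv)_{i;0}$, for which no such shrinking of the dimension vector is available. I would attack it by stratifying further using the dual statistic $\varphi_i^*(x):=\dim_{\F_i}(V_i/\im{}_i\tilde x)$. A dual version of Proposition \ref{prop:fb}, with the roles of $\tilde x_i$ and ${}_i\tilde x$ interchanged in the spirit of the symmetry exhibited in Lemma \ref{lem:kformulas}, would control the sub-loci with $\varphi_i=0$ and $\varphi_i^*\geq 1$, bounding their dimensions by $d$ via the same fiber-bundle calculation. On the remaining locus where $\varphi_i=\varphi_i^*=0$ simultaneously, Lemma \ref{lem:CBf} forces $\langle\bfv,\check\alpha_i\rangle\leq 0$; whenever some $i\in I$ satisfies $\langle\bfv,\check\alpha_i\rangle>0$ one can simply pick that $i$ and this locus is empty, so no additional work is needed.

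The genuinely delicate case, which I expect to be the technical heart, is a dimension vector $\bfv$ for which $\langle\bfv,\check\alpha_i\rangle\leq 0$ for every $i$ (an ``imaginary'' $\bfv$). Bounding the dimension of the $\varphi_i=\varphi_i^*=0$ locus there cannot be done inductively and would likely require a direct estimate in the ambient space $\bigoplus_{{}_ja_i\in A}\Hom_{\F_j}({}_jM_i\otimes V_i,V_j)$, for instance a complete-intersection argument showing that the $r_i=0$ relations define a closed subscheme of codimension at most $D(\bfv)-\tfrac{1}{2}(\bfv,\bfv)$ less than that of the ambient space, analogous to Crawley-Boevey's treatment in the symmetric case. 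Once that bound is available the induction closes and, as noted above, the bijection of irreducible components follows formally.
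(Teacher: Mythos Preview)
Your induction and treatment of the strata $\Lambda(\bfv)_{i;k}$ with $k\geq 1$ match the paper's argument exactly. The genuine gap is in how you handle the purity of $\Lambda(\bfv)$ itself: you fix a single $i$ from the outset and try to control $\Lambda(\bfv)_{i;0}$ directly, which leads you to the ``delicate imaginary case'' that you do not resolve. No complete-intersection estimate is needed.

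The paper sidesteps this entirely by exploiting \emph{nilpotency}. Every $x\in\Lambda(\bfv)$ is a nilpotent representation, so (for $\bfv\neq 0$) it has a simple submodule, i.e.\ $\varphi_j(x)\geq 1$ for \emph{some} $j\in I$. Hence
\[
\Lambda(\bfv)=\bigcup_{j\in I}\ \bigcup_{k\geq 1}\Lambda(\bfv)_{j;k},
\]
a finite union of locally closed pieces whose dimensions you have already computed (each equals $d$ by the $k\geq 1$ calculation, applied with $j$ in place of $i$). Any irreducible component $X$ of $\Lambda(\bfv)$ therefore has a dense open intersection with some $\Lambda(\bfv)_{j;k}$ with $k\geq 1$, and the argument you sketched for such strata forces $\dim X=d$. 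Only \emph{after} establishing that $\Lambda(\bfv)$ is pure of dimension $d$ does the paper return to a fixed $i$ and conclude that $\Lambda(\bfv)_{i;0}$, being open in $\Lambda(\bfv)$, inherits purity.

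In short: the key idea you are missing is to let $i$ vary with the component when proving purity of $\Lambda(\bfv)$. Once you do that, nilpotency kills the $k=0$ obstruction and there is no imaginary case to worry about.
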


\begin{proof}
Proceed by induction on $\bfv$, the case $\bfv=0$ being trivial. Fix $\bfv \neq 0$, and assume the statement for all smaller $\bfv'$. 

Fix $k\geq1$.
By Proposition \ref{prop:fb},  $\Lambda(\bfv;i;k)$ is a fiber bundle over each of $\Lambda(\bfv)_{i;k}$ and $ \Lambda(\bfv - k {\bf 1}_i)_{i;0}$. By considering the dimensions of the fibers,
\begin{equation}
  \dim_\F \Lambda(\bfv)_{i;k}  =   \dim_\F \Lambda(\bfv - k \alpha_i)_{i;0} +  d_i k \dim_{\F_i} V^i. 
\end{equation}
Using induction and substituting the dimension of $V^i$ (see \eqref{eq:dvi}) gives 
\begin{equation}
\begin{aligned}
& \quad   \dim_\F \Lambda(\bfv)_{i;k}   \\ & =   D(\bfv-k \alpha_i) - \frac{1}{2}( \bfv-k \alpha_i, \bfv-k\alpha_i ) +  2 kd_i v_i-k(\bfv, \alpha_i) \\ 
  &= \{D(\bfv) - 2 d_i v_i k + d_i k^2\} \hspace{-0.03cm} - \hspace{-0.03cm}  \frac{1}{2}\{ (\bfv, \bfv) \hspace{-0.03cm} - \hspace{-0.03cm}  2k (\bfv, \alpha_i) + 2 d_i k^2 \} +  2 kd_i v_i-k(\bfv, \alpha_i) \\ &= D(\bfv) - \frac{1}{2}( \bfv ,\bfv ).
  \end{aligned} 
\end{equation}

Now, fix $X \in  \Irr \Lambda(\bfv)$. Every point is a nilpotent representation, so is in $\Lambda(\bfv)_{i; k}$ for some $i$ and $k \geq 1$. In particular, $X$ has an open dense subset contained in $\Lambda(\bfv)_{i;k}$ for some $i$ and $k\geq 1$. The result follows for $X$ since it is true for $\Lambda(\bfv)_{i;k}$. 

Finally we must handle the case of $\Lambda(\bfv)_{i;0}$. But this is open in $\Lambda(\bfv),$ so every irreducible component is open and dense in some irreducible component of $\Lambda(\bfv)$, and the result follows by the previous paragraph.

The required bijections of components are clear. 
\end{proof}

\subsection{Crystal operators} \label{ss:crystalops}
Let
\begin{equation}
B= \coprod_\bfv \Irr \Lambda(\bfv).
\end{equation}
By Proposition \ref{prop:fb}, we have bijections
\begin{equation}
f_{i;k}:  \coprod_\bfv \Irr \; \Lambda(\bfv)_{i;k} \rightarrow \coprod_\bfv  \Irr\;  \Lambda(\bfv)_{i;0}.
\end{equation}
Define 
\begin{equation}
 f_i := \bigsqcup_k  f_{i;k-1}^{-1}   f_{i;k},
\quad\quad 
 e_i := \bigsqcup_k f_{i;k+1}^{-1} f_{i;k},
\end{equation}
where for $X \in \Irr \; \Lambda(\bfv)_{i;0}$ we set $f_i(X) = \emptyset$. 
By Lemma \ref{lem:dbc}, $\Irr \; \Lambda(\bfv)$ is in bijection with $\coprod_k \Irr \; \Lambda(\bfv)_{i;k}$, where $X$ corresponds to the component of $\coprod_k \Irr \; \Lambda(\bfv)_{i;k}$ that is dense in $X$. This gives operators $ f_i$ and $ e_i$ on $B$.

We also need the $*$ operators, which are constructed in an analogous way. 
Define
\begin{equation}
 \varphi_i^*(x) = \dim_{\F_i} V_i/\text{im}({}_i \tilde x) \quad \text{and} \quad \Lambda(V)_{i}^k  = \{ x \in \Lambda(v): \varphi_i^*(x)=k \}.
 \end{equation}
 Let $ \Lambda^*(\bfv; i; k)$ be the variety whose points consist of on element of $\Lambda(V)_{i}^k$ along with a short exact sequence
 \begin{equation}
 0 \rightarrow \bar V \rightarrow V \rightarrow {\Bbb F}_i^k \rightarrow 0.
 \end{equation}
Consider the natural projections 
$\pi^*_1 : \Lambda^*(\bfv;i;k) \rightarrow \Lambda(\bfv)_{i}^k$ and 
$\pi^*_2 : \Lambda(\bfv;i;k) \rightarrow \Lambda(\bar \bfv)_i^0.$
$\pi_2^* \circ (\pi_1^*)^{-1}$ defines a bijection $ f_{i;k}^*$ on the level of irreducible components.
Let  
\begin{equation}
\f_i^* =\coprod_k (\f_{i; k-1}^*)^{-1} \circ \f_{i;k}^* \quad \text{and } \quad  e_i^* = \coprod_k ( f_{i; k+1}^*)^{-1} \circ \f_{i;k}^*.
\end{equation}
$\Irr \Lambda(\bfv)$ and $\coprod_k \Irr \Lambda(\bfv)_i^k$ are in bijection, so this gives operators on  $B.$

\subsection{Reworded operators}

Recall that a property is said to hold generically on a topological space if it holds on an open-dense subset. 

\begin{prop} \label{prop:gen-def}
Fix $X \in \Irr \Lambda(\bfv)$. For generic $x \in X$ and a generic extension
$$
0 \rightarrow S_i \rightarrow (V', x') \rightarrow (V, x) \rightarrow 0,$$
$(V', x')$ is in a single irreducible component $Y \in \Irr \Lambda(V')$, and $Y= \e_i X$.
Furthermore, the subset of $Y$ which can be realized in this way is open-dense. 

Similarly, for generic $x \in X$ and a generic extension
$$
0 \rightarrow (V, x)  \rightarrow (V', x') \rightarrow S_i \rightarrow 0,$$
$(V', x')$ is in a single irreducible component $Y \in \Irr \Lambda(V')$, and $Y= \e_i^* X$.
Furthermore, the subset of $Y$ which can be realized in this way is open-dense. 
\end{prop}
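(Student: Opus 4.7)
The plan is to relate the fiber-bundle definition of $e_i$ from Proposition \ref{prop:fb} to the description via generic extensions. Since $\varphi_i$ is constructible and $X$ is irreducible, $\varphi_i$ takes a unique constant value $k$ on a dense open $X^\circ = X \cap \Lambda(\bfv)_{i;k} \subset X$. The central observation is that both $(V, x)$ and $(V', x')$ will have the same \emph{reduced} representation on $V/\ker \tilde x_i \simeq V'/\ker \tilde x'_i$, and hence correspond to the same irreducible component $\bar X \in \Irr \Lambda(\bar \bfv)_{i;0}$ where $\bar \bfv = \bfv - k \alpha_i$.

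The key linear-algebra computation is that any extension $0 \to S_i \to (V', x') \to (V, x) \to 0$ satisfies $\varphi_i(x') = k+1$. The quotient map $f \colon V' \to V$ is a $\Lambda$-module map which is the identity at vertices $j \neq i$, so the induced map $f^i \colon V'^i \to V^i$ is the identity, and $\tilde x_i \circ f_i = \tilde x'_i$. Hence $\ker \tilde x'_i = f_i^{-1}(\ker \tilde x_i)$, a one-dimensional $\F_i$-extension of $\ker \tilde x_i$; moreover $f_i$ descends to an isomorphism $V'_i / \ker \tilde x'_i \xrightarrow{\sim} V_i / \ker \tilde x_i$ intertwining the induced representations, so $(V', x')$ and $(V, x)$ yield the same reduced module $\bar x$ on $\bar V$.

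Now by Proposition \ref{prop:fb}, the irreducible components of $\Lambda(\bfv)_{i;k}$ and of $\Lambda(\bfv + \alpha_i)_{i; k+1}$ are each in bijection with $\Irr \Lambda(\bar \bfv)_{i;0}$ via quotienting by $\ker \tilde{\cdot}_i$; hence $X$ and the component containing $(V', x')$ both correspond to the same $\bar X$, and by the definition of $e_i$ in Section \ref{ss:crystalops} this latter component is precisely $Y = e_i X$. For generic $x$ and extension, the image avoids all pairwise intersections of components, so $(V', x')$ lies in $Y$ alone. For density, conversely, let $y = (V', x') \in Y \cap \Lambda(\bfv + \alpha_i)_{i; k+1}$, which is open dense in $Y$ by Lemma \ref{lem:dbc}; any nonzero $s \in \ker \tilde x'_i$ spans a submodule $\F_i s \simeq S_i$, and the quotient $y / \F_i s$ has $\varphi_i = k$ and reduction $\bar x \in \bar X$, so by the same bijection it lies in $X$. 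Thus every such $y$ arises as an extension of some $x \in X$, proving the realized subset of $Y$ is open dense.

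The $*$-version is entirely symmetric: for $0 \to V \to V' \to S_i \to 0$, the inclusion $\iota \colon V \hookrightarrow V'$ satisfies ${}_i \tilde x' = \iota_i \circ {}_i \tilde x$, so $\im {}_i \tilde x' = \im {}_i \tilde x$ and hence $\varphi_i^*(x') = \varphi_i^*(x) + 1$; the common submodule $\bar V \subset V \subset V'$ makes the $*$-analogue of Proposition \ref{prop:fb} place $(V', x')$ in $e_i^* X$, and the density argument reverses by choosing any surjection $V' \twoheadrightarrow S_i$ compatible with the cokernel at $i$. The main conceptual obstacle is verifying the matching in the third paragraph—that the component $Y$ is precisely $e_i X$—which hinges on the fact that $e_i$ was defined by exactly this reduction bijection of components; the linear-algebra computations and the density argument are routine by comparison.
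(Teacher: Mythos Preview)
Your proof is correct and follows essentially the same strategy as the paper: both reduce to the fiber-bundle bijections of Proposition~\ref{prop:fb} by showing that $(V,x)$ and its extension $(V',x')$ have the same reduced module $\bar x \in \Lambda(\bar\bfv)_{i;0}$, hence correspond to related components under $e_i$. The paper packages this in the diagram
\[
\Lambda(\bfv)_{i;k} \leftarrow \Lambda(\bfv;i;k) \rightarrow \Lambda(\bar\bfv)_{i;0} \leftarrow \Lambda(\bfv+\alpha_i;i;k+1) \rightarrow \Lambda(\bfv+\alpha_i)_{i;k+1}
\]
and tracks open-dense sets through the maps, whereas you make the key linear-algebra step explicit: from $\tilde x'_i = \tilde x_i \circ f_i$ you deduce $\varphi_i(x') = k+1$ for \emph{every} extension, not just a generic one. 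This is slightly stronger than what the paper writes down and makes the matching with $e_iX$ transparent. Your density argument (picking any line in $\ker \tilde x'_i$ to exhibit an $S_i$-submodule) is also a bit more concrete than the paper's, which simply chases open sets back through the fiber bundles. The one place where the paper is more careful is in isolating the locus where $(V',x')$ lies in $Y$ \emph{alone}: it intersects with the set $\widehat X'_o$ of points of $e_iX$ not lying in any other component and pulls this back to an explicit open-dense $X_o \subset X$; your sentence ``for generic $x$ and extension, the image avoids all pairwise intersections of components'' is correct but could be made precise in the same way.
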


\begin{proof}
Consider the first statement. 
 Let $k$ be the generic value of $\varphi_i$ on $X$. Let $\widehat X_o$ be the subset of $X \cap \Lambda(\bfv)_{i; k}$ consisting of points which are not in any other irreducible component of $\Lambda(\bfv)$. Clearly $X \backslash \widehat X_o$ is closed and $\widehat X_o$ is non-empty, so $\widehat X_o$ is open dense. 
 Consider  
\begin{equation} \label{eq:4pis}
\mbox{} \hspace{-0.4cm}
\xymatrix{
  & \ar@/ /[dl]_{\pi_1}  \Lambda(\bfv;k)  \ar@/ /[rd]^{\pi_3  \pi_2} && \ar@/ /[dl]_{\pi'_3  \pi'_2}  \Lambda(\bfv+\alpha_i;k+1)  \ar@/ /[rd]^{\pi'_1} & 
 \\
\Lambda(\bfv)_{i;k} && \Lambda(\bfv- k \alpha_i)_{i;0} && \mbox{} \hspace{-1cm} \Lambda(\bfv+\alpha_i)_{i;k+1}.
\\
}
\end{equation}
Recall that all these maps give bijections of irreducible components.
Let $X'= e_i(X)$, and define $\widehat X'_o$ analogously to $\widehat X_o$.
Then the stated condition holds for any $x$ in
\begin{equation}
X_o =  \widehat X_o \cap  \pi_1 (\pi_3 \pi_2)^{-1} \pi'_3\pi'_2 (\pi'_1)^{-1} \widehat X'_o,
\end{equation}
which is open dense.
The second statement follows by a symmetric argument. 
\end{proof}

\subsection{The realization}
Recall that $B = \coprod_\bfv \Irr \; \Lambda(\bfv)$. For each $X \in \Irr \Lambda(\bfv)$, define:
\begin{itemize}
\item $\wt(X)=\sum_i v_i \alpha_i$.

\item $\displaystyle \varphi_i(X)= \min_{x \in X} \varphi_i(x) =  \min_{x \in X} \dim_{\F_i} \ker \tilde x_i, \quad  \varepsilon_i(X)= \varphi_i(X) - \langle \wt(X), \check\alpha_i \rangle.$

\item $\displaystyle \varphi_i^*(X)= \min_{x \in X} \varphi^*_i(x) =   \min_{x \in X} \dim_{\F_i} V_i/\text{im}({}_i \tilde x), \quad \varepsilon^*_i(X)= \varphi^*_i(X)-\langle \wt(X), \check\alpha_i \rangle.$

\end{itemize} 

\begin{lemma} \label{lem:pcc} $B$ along with either $\{ e_i, f_i\}$ or $\{ e_i^*, f_i^* \}$ and the additional data defined above is a lowest weight combinatorial crystal, where the lowest weight element is $\Lambda({\bf 0}).$ 
\end{lemma}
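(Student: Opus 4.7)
The plan is to check each of the three combinatorial-crystal axioms from Definition \ref{def:crystal} and both conditions from Definition \ref{def:lwcrystal} in turn. I carry out the argument for $(B,e_i,f_i)$; the case of $(B,e_i^*,f_i^*)$ is entirely symmetric.

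First I would verify the crystal axioms. Axiom (i) is definitional, since $\varepsilon_i(X)$ was introduced as $\varphi_i(X) - \langle \wt(X), \check\alpha_i\rangle$. For axiom (ii), the crucial observation is that the minimum defining $\varphi_i(X)$ equals the generic value of $\varphi_i$ on $X$, by upper semi-continuity of kernel dimension, and this generic value is precisely the $k$ for which $X$ lies in $\Irr \Lambda(\bfv)_{i;k}$ (Lemma \ref{lem:dbc}). The construction of $e_i$ via Proposition \ref{prop:fb} takes such an $X$ to an element of $\Irr \Lambda(\bfv+\alpha_i)_{i;k+1}$, so $\wt$ increases by $\alpha_i$ and $\varphi_i$ increases by $1$; axiom (i) together with $\langle \alpha_i,\check\alpha_i\rangle = 2$ then forces $\varepsilon_i$ to drop by $1$. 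Axiom (iii) is built into the construction, since $e_i$ and $f_i$ are defined as mutually inverse bijections $\Irr \Lambda(\bfv)_{i;k} \leftrightarrow \Irr \Lambda(\bfv\pm\alpha_i)_{i;k\pm 1}$ factored through $\Irr \Lambda(\bfv-k\alpha_i)_{i;0}$.

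Next I would set $b_- = \Lambda(\mathbf{0})$, which automatically has $\wt(b_-) = 0$. Condition (ii) of Definition \ref{def:lwcrystal} is immediate once the crystal axioms are in hand: $f_i^j X$ lies in $\Irr \Lambda(\bfv-j\alpha_i)_{i;\varphi_i(X)-j}$ for $0 \leq j \leq \varphi_i(X)$, and by the definition of $f_i$ it sends the $\Irr \Lambda(\bar\bfv)_{i;0}$ stratum to $\emptyset$, yielding $\max\{n : f_i^n(X) \neq \emptyset\} = \varphi_i(X)$.

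The only substantive work is in condition (i) of lowest weight, and this is the main obstacle: starting from $X \in \Irr \Lambda(\bfv)$ with $\bfv \neq 0$, I must produce some $i$ with $\varphi_i(X) \geq 1$, after which induction on $|\bfv|$ gives a chain of $f_i$'s descending to $\Lambda(\mathbf{0})$. My approach is to consider the loci
\begin{equation}
X_i := \{ x \in X : \varphi_i(x) \geq 1 \} \subset X, \qquad i \in I.
\end{equation}
Each is closed in $X$ because $x \mapsto \dim_{\F_i} \ker \tilde x_i$ is upper semi-continuous. Since every point of $\Lambda(\bfv)$ is a nilpotent representation and $\bfv \neq 0$ forces it to be nonzero, such a representation admits a simple submodule $S_i$, giving $\Hom(S_i,V) \neq 0$ and hence $x \in X_i$ by Lemma \ref{lem:kformulas}. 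Thus $X = \bigcup_i X_i$, and irreducibility of $X$ forces $X_i = X$ for some $i$, whence $\varphi_i(X) \geq 1$. The $*$-case runs identically using simple quotients in place of simple submodules and $\Hom(V,S_i)$ in place of $\Hom(S_i,V)$.
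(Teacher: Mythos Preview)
Your proof is correct and follows essentially the same approach as the paper, which dispatches the crystal axioms as ``immediate from the construction'' and then invokes nilpotency to show that every nonzero component admits some $f_i$ (respectively $f_i^*$) step. You have simply filled in the details the paper omits, including the irreducibility argument $X = \bigcup_i X_i \Rightarrow X = X_i$ for some $i$, which the paper leaves implicit (this argument already appears in the proof of Lemma~\ref{lem:dbc}).
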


\begin{proof}
For either structure the conditions in Definitions \ref{def:crystal} are immediate from the construction. The condition that any $x \in \Lambda(\bfv)$ is nilpotent implies that, for any $X \in B$ of weight $\neq 0$, there are $i$ and $j$ such that $\f_i X, \f_j^* X \neq 0.$ This, along with the definition of $\varphi_i(X), \varphi_i^*(X)$, shows that these combinatorial crystals are lowest weight.   
\end{proof}

\begin{lemma} \label{lem:tcb}
Fix $X \in B$. For generic $T \in X$, 
 $$\varphi_i(X)+\varphi_i^*(X)- \langle \wt(X), \check\alpha_i \rangle = \dim \Ext^1(T,S_i)= \dim \Ext^1(S_i,T).$$
\end{lemma}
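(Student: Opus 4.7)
The plan is to combine Lemma \ref{lem:kformulas}, which identifies $\Hom(S_i, T)$ and $\Hom(T, S_i)$ with the subspaces appearing in the definitions of $\varphi_i$ and $\varphi_i^*$, with the Euler-characteristic-style formula of Lemma \ref{lem:CBf}, which expresses $\dim \Ext^1$ as the signed sum of these Hom dimensions and $\langle \dim T, \check\alpha_i \rangle$.

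More precisely, I would first observe that for a point $T=(V,x) \in X$, Lemma \ref{lem:kformulas} gives $\dim_{\F_i} \Hom(S_i,T) = \dim_{\F_i} \ker \tilde x_i$ and $\dim_{\F_i} \Hom(T,S_i) = \dim_{\F_i} V_i/\im {}_i \tilde x$. Thus, by definition, $\varphi_i(X)$ and $\varphi_i^*(X)$ are the minima over $X$ of these two functions of $x$.

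Next I would argue that each of these minima is generically attained. The function $x \mapsto \dim_{\F_i} \ker \tilde x_i$ is upper semicontinuous: the locus $\{x : \mathrm{rk}\, \tilde x_i \geq r\}$ is open (it is cut out by non-vanishing of $r \times r$ minors). Hence the set $U_1 \subset X$ on which $\dim \ker \tilde x_i = \varphi_i(X)$ is open, and by the definition of $\varphi_i(X)$ as a minimum it is non-empty. Since $X$ is irreducible, $U_1$ is open-dense. By the same reasoning applied to ${}_i \tilde x$, the set $U_2 \subset X$ on which $\dim V_i/\im {}_i \tilde x = \varphi_i^*(X)$ is open-dense.

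Finally, $U_1 \cap U_2$ is open-dense in $X$, so for generic $T \in X$ both $\dim_{\F_i} \Hom(S_i, T) = \varphi_i(X)$ and $\dim_{\F_i} \Hom(T, S_i) = \varphi_i^*(X)$ hold simultaneously. Substituting into Lemma \ref{lem:CBf} (noting $\langle \dim T, \check\alpha_i \rangle = \langle \wt(X), \check\alpha_i \rangle$) then yields the desired equality
\[
\dim \Ext^1(T, S_i) = \dim \Ext^1(S_i, T) = \varphi_i(X) + \varphi_i^*(X) - \langle \wt(X), \check\alpha_i \rangle.
\]
There is no real obstacle here: the content of the lemma is entirely bookkeeping, and the only non-tautological input is the upper semicontinuity of the kernel/cokernel dimensions combined with the irreducibility of $X$, which together guarantee that both generic-value conditions are realized at a common point.
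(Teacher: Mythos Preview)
Your argument is correct and follows the same approach as the paper: the paper's proof is simply the one-line assertion that the statement ``is immediate from Lemma~\ref{lem:CBf} and the definitions of $\varphi_i(X)$ and $\varphi_i^*(X)$,'' and you have unpacked exactly what makes it immediate, including the semicontinuity argument ensuring both minima are simultaneously attained on an open-dense subset of $X$.
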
 

\begin{proof}
This is immediate from Lemma \ref{lem:CBf} and the definitions of $\varphi_i(X)$ and $\varphi_i^*(X). $
\end{proof}

\begin{prop} \label{prop:scc}
Fix $X \in B$ and $i,j \in I.$ 
\begin{enumerate}

\item \label{scc1} If $\varphi_i(b)+\varphi_i^*(b)- \langle \wt(b), \check\alpha_i \rangle = 0$, then $e_i(X)=e_i^*(X)$.

\item \label{scc2} If $\varphi_i(b)+\varphi_i^*(b)- \langle \wt(b), \check\alpha_i \rangle \geq 1$, then $\varphi_i (e_i^*(X))=\varphi_i(X)$ and $\varphi^*_i (e_i(X))=\varphi^*_i(X)$. 

\item \label{scc3} If either $i \neq j$ or $\varphi_i(b)+\varphi_i^*(b)- \langle \wt(b), \check\alpha_i \rangle \geq 2$, then $e_i^*e_j(X) = e_j e_i^*(X)$. 

\end{enumerate}
\end{prop}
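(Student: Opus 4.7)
The plan relies on Proposition \ref{prop:gen-def}, which realizes generic points of $e_i X$ and $e_i^* X$ as generic extensions with $S_i$, together with Lemma \ref{lem:tcb} and the explicit description of extensions implicit in the proof of Lemma \ref{lem:kformulas}. Throughout, let $(V,x)$ be generic in $X$.

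For \eqref{scc1}, the hypothesis combined with Lemma \ref{lem:tcb} gives $\Ext^1(S_i,V) = \Ext^1(V,S_i) = 0$, so every extension of either shape splits and both $e_iX$ and $e_i^*X$ equal the irreducible component containing $V \oplus S_i$.

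For \eqref{scc2}, by symmetry it suffices to show $\varphi_i(e_i^*X) = \varphi_i(X)$. A generic point $(V',x')$ of $e_i^*X$ arises from a generic extension $0 \to (V,x) \to (V',x') \to S_i \to 0$. Fixing a vector space splitting $V'_i = V_i \oplus \F_i$ and running the computation in the proof of Lemma \ref{lem:kformulas}, one has $\tilde x'_i = (\tilde x_i, \psi) : V_i \oplus \F_i \to V^i$, where the extension class $\psi : \F_i \to V^i$ has image in $\ker {}_i \tilde x$. The hypothesis of \eqref{scc2} forces $\ker {}_i \tilde x / \im \tilde x_i \neq 0$, so for generic $\psi$ the line $\im \psi$ avoids $\im \tilde x_i$; hence $\ker \tilde x'_i = \ker \tilde x_i \oplus 0$, giving $\varphi_i(V',x') = \varphi_i(V,x) = \varphi_i(X)$.

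For \eqref{scc3} with $i \neq j$, a generic point of $e_i^* e_j X$ (respectively $e_j e_i^* X$) is an iterated extension of $(V,x)$ by an $S_j$ submodule and an $S_i$ quotient. Since $i \neq j$, these enlarge different graded pieces ($V_j$ gains $\F_j$ and $V_i$ gains $\F_i$); matching up the two pieces of extension data on either side — keeping track of the extra summand ${}_i M_j$ or ${}_j M_i$ that appears in $(V')^i$ or $(V')^j$ after the first extension — one checks that the underlying module is the same regardless of order, so the two components agree.

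For \eqref{scc3} with $i = j$ and $\dim \Ext^1(S_i,V) \geq 2$ — which I expect to be the main obstacle — both $W \in e_i^* e_i X$ and $W' \in e_i e_i^* X$ carry a filtration $0 \subset S_i \subset N \subset W$ with $N/S_i \cong V$ and $W/N \cong S_i$, so their associated graded pieces agree. Parametrizing the two iterated extensions using Lemma \ref{lem:kformulas} gives a pair $(\phi,\psi)$ with $\phi \in (V^i/\im \tilde x_i)^*$ and $\psi \in \ker {}_i \tilde x \cap \ker \phi$ for one order, and an analogous pair in the opposite order. The hypothesis $\dim \Ext^1 \geq 2$ is precisely what allows the first and second extension classes to be chosen in transverse directions and reparametrized into the other order, so both constructions yield generic points in a single irreducible component of $\Lambda(\bfv + 2\alpha_i)$. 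This mirrors the symmetric-type argument in \cite{KS97}.
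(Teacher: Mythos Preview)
Your arguments for \eqref{scc1} and \eqref{scc2} are correct and essentially match the paper's: both use Lemma \ref{lem:tcb} together with Proposition \ref{prop:gen-def}, and your explicit computation of $\ker \tilde x'_i$ in \eqref{scc2} is just an unpacking of the paper's one-line claim that $\Hom(T',S_i)\simeq\Hom(T,S_i)$ for a non-split extension.

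For \eqref{scc3} your approach diverges from the paper, and as written it has a real gap. You try to match the iterated extensions directly by comparing parametrizing data $(\phi,\psi)$ in the two orders. In the $i\neq j$ case you acknowledge that $(V')^i$ acquires an extra summand ${}_iM_j\otimes\F_j$ after the first step, but ``one checks that the underlying module is the same'' is not a proof; the second extension class genuinely lives in a larger space, and you have not exhibited the bijection between the two parametrizations nor verified that it respects genericity. In the $i=j$ case the situation is worse: the existence of a common filtration type $0\subset S_i\subset N\subset W$ only says the associated graded modules agree, which is far from showing the components coincide, and the ``transversality'' and ``reparametrization'' assertions are not established.

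The paper avoids all of this bookkeeping by arguing in the opposite direction. It takes a generic $T''\in e_i^*e_jX$ and shows that the natural map from the $i$-socle of $T''$ to its $j$-head is zero (trivial for $i\neq j$; for $i=j$ one shows inductively, using $\dim\Ext^1\geq 2$ and Lemma \ref{lem:CBf}, that $S_i$ is not a direct summand of $T$, $T'$, or $T''$). It follows that on $T''$ the operations of passing to the subquotient killing one $S_i$ from the head and one $S_j$ from the socle commute, hence $f_i^*f_j(e_i^*e_jX)=f_jf_i^*(e_i^*e_jX)=X$. Since also $f_i^*f_j(e_je_i^*X)=X$, Definition \ref{def:crystal}\eqref{def:dcp3} forces $e_i^*e_jX=e_je_i^*X$. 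This sidesteps any explicit matching of extension data.
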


\begin{proof}
Fix $X$ and let $T$ be the representation corresponding to a generic point in $X$, meaning one where all $\varphi_i, \varphi_i^*$ are minimal. 

In case \eqref{scc1},  by Lemma \ref{lem:tcb}, $\Ext^1(T,S_i)= \Ext^1(S_i,T)=0$, so the generic extensions in Proposition \ref{prop:gen-def} are in fact trivial extensions, and $e_i(X)=e_i^*(X)$.

In case \eqref{scc2}, by Lemma \ref{lem:tcb}, $\dim \Ext^1(T,S_i)>0$, so, if $T'$ is the generic extension 
\begin{equation}
0 \rightarrow S_i \rightarrow T' \rightarrow T \rightarrow 0,
\end{equation}
 from Proposition \ref{prop:gen-def}, then $ \Hom(T', S_i) \simeq \dim \Hom (T, S_i)$. Using Lemma \ref{lem:kformulas}, $\varphi_i (e_i^*(X))=\varphi_i(X)$. The other equality is true by a similar argument. 

In case \eqref{scc3}, consider generic $T' \in e_j X$ and $T''$ in $e_i^* e_j X$. We claim that 
the natural homomorphism from the $i$-socle of $T''$ to the $j$-head is trivial. 
If $i \neq j$ this is clear. If $i=j$, it suffices to show that $S_i$ is not a direct summand of $T''$. First, notice that $S_i$ cannot be a summand of $T$: If $T= S_i+\overline T$ then, since $T$ is generic, this would imply $\text{Ext}^1(\overline{T}, S_i) = 0$, and hence $\text{Ext}^1(T, S_i) = 0$, which is false by Lemma \ref{lem:tcb}. Since $\Ext(S_i, T) >0$, using Proposition \ref{prop:gen-def}, a generic $T' \in e_i X$ also doesn't contain $S_i$ as a direct summand. But then, using Lemma \ref{lem:CBf},
\begin{equation}
\text{dim Ext}^1(T',S_i) 
= \text{dim Ext}^1(T, S_i) - 1 > 0.
\end{equation}
The same argument then shows that $T''$ also does not contain $S_i$ as a direct summand. 

By Proposition \ref{prop:gen-def}, applying either $f_i^* f_j$ or $f_j f_i^*$ generically takes a subquotient that decreases the dimension of both the $i$-head and $j$-socle by 1. The homomorphism from $i$-head to $j$-socle is trivial so these operations commute. Hence 
\begin{equation}
f_i^* f_j e_i^*e_j X= f_jf_i^* e_i^* e_j X=X.
\end{equation}
But $ f_i^* f_j e_j e_i^* X=X$ as well so, using Definition \ref{def:crystal}\eqref{def:dcp3}, $e_i^*e_j X=  e_j e_i^* X$. 
\end{proof}

\begin{theorem} \label{th:iscrystal}
$B$ is a realization of $B(-\infty)$.  
\end{theorem}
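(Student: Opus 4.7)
The plan is to apply Proposition \ref{prop:comb-characterizaton2} by verifying its hypotheses one by one; essentially all the hard work has already been absorbed into Lemmas \ref{lem:pcc}, \ref{lem:tcb} and Proposition \ref{prop:scc}, so this final theorem is an assembly argument.

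First I would observe that Lemma \ref{lem:pcc} already gives that $(B,e_i,f_i)$ and $(B,e_i^*,f_i^*)$ are both lowest weight combinatorial crystals sharing the lowest weight element $b_- = \Lambda(\mathbf{0})$ of weight $0$. This takes care of the standing hypothesis of Proposition \ref{prop:comb-characterizaton2}. It then remains to check conditions (ccc0)--(ccc5).

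For condition (ccc0), I would note that for any $X \in \Irr\Lambda(\bfv)$ and any $i$, Proposition \ref{prop:fb} produces, via the inverse of $f_{i;k+1}^{-1}f_{i;k}$, an irreducible component of $\Lambda(\bfv+\alpha_i)$ which is by definition $e_i X$; hence $e_i(X)\neq \emptyset$, and the same argument applies to $e_i^*$. For condition (ccc2), Lemma \ref{lem:tcb} identifies the quantity $\varphi_i(X)+\varphi_i^*(X)-\langle\wt(X),\check\alpha_i\rangle$ with $\dim \Ext^1(T,S_i)$ for generic $T\in X$, which is manifestly non-negative. Conditions (ccc3), (ccc4) and half of (ccc5) are exactly parts \eqref{scc1}, \eqref{scc2} and \eqref{scc3} of Proposition \ref{prop:scc}. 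Finally, condition (ccc1), which requires $e_i^*e_j(X)=e_je_i^*(X)$ for $i\neq j$, is the other half of Proposition \ref{prop:scc}\eqref{scc3}.

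Once all six conditions are verified, Proposition \ref{prop:comb-characterizaton2} immediately yields $(B,e_i,f_i)\simeq(B,e_i^*,f_i^*)\simeq B(-\infty)$, which is exactly the statement of Theorem \ref{th:iscrystal}. The main potential obstacle in this program is not in the final assembly (which is essentially bookkeeping) but in confirming that Proposition \ref{prop:scc}\eqref{scc3} really covers \emph{both} (ccc1) and (ccc5): the former requires commutativity whenever $i\neq j$ regardless of the value of $\varphi_i+\varphi_i^*-\langle\wt,\check\alpha_i\rangle$, while the latter requires it for $i=j$ under the inequality $\geq 2$. A quick read of the statement of Proposition \ref{prop:scc}\eqref{scc3} confirms that both cases are handled together, so no additional argument is needed and the proof reduces to citation.
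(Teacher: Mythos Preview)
Your proposal is correct and follows essentially the same route as the paper: invoke Lemma~\ref{lem:pcc} for the lowest-weight crystal structure, then verify the six conditions of Proposition~\ref{prop:comb-characterizaton2} using Lemma~\ref{lem:tcb} for \eqref{ccc2} and Proposition~\ref{prop:scc} for \eqref{ccc1}, \eqref{ccc3}, \eqref{ccc4}, \eqref{ccc5}, with \eqref{ccc0} immediate from the construction. Your explicit observation that Proposition~\ref{prop:scc}\eqref{scc3} simultaneously handles both \eqref{ccc1} and \eqref{ccc5} is exactly what the paper's one-line citation relies on.
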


\begin{proof}
By Proposition \ref{lem:pcc} it remains to check the conditions of Proposition \ref{prop:comb-characterizaton2}. Condition 
\eqref{ccc0} is immediate, \eqref{ccc2} is clear from Lemma \ref{lem:tcb}, and \eqref{ccc1},\eqref{ccc3},\eqref{ccc4},\eqref{ccc5} are Proposition \ref{prop:scc}. 
\end{proof}

\section{Examples} 

\subsection{Type $C_2$  }
Consider the modulated 
graph from Example \ref{ex:C2-intro}, where $\F_1=\R$ and $\F_2=\C$. In this case $\Lambda$ is representation-finite, and each indecomposable representation can be uniquely identified by its socle filtration, which we record from right to left. So, for example,  $\C\R^2$ means the unique indecomposable with a copy of the simple $\C$ over vertex $2$ in its head, and two copies of the simple $\R$ over vertex $1$ in its socle. 
The isomorphism classes of indecomposable $\Lambda$-modules are listed in Figure \ref{fig:indecomposables}. 

\begin{figure}

\hspace{0.5cm} \xymatrix{
& \R \ar@{-}[rr] && \R\C && \R\C\R \\
\C \ar@{-}[ru] \ar@/^/@{-}[rrrr] && \C\R \ar@{-}[lu] \ar@{-}[ru]&& \C\R^2  \ar@{-}[lu]&& \C\R^2\C \\
&& \R^2\C \ar@{-}[llu]  \ar@{-}[u]  \ar@{-}[rru]
}

\caption{\label{fig:indecomposables} Indecomposable representations for the preprojective algebra of type $C_2$. Lines connect pairs which admit a non-trivial extension.}
\end{figure}
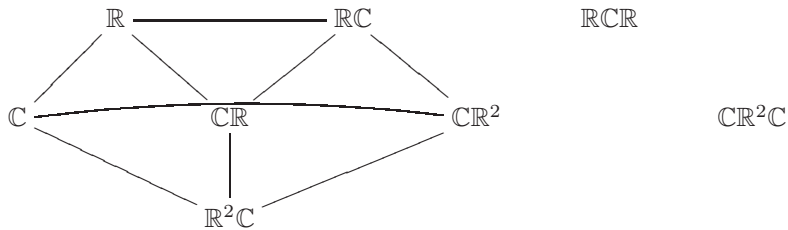

To see that there are no other indecomposables, first check that $\mathbb{R} \mathbb{C} \mathbb{R}$ and $\mathbb{C} \mathbb{R}^2 \mathbb{C}$ are the indecomposable projectives. Next fix some module $V$. If an element of $ {}_2 M_1 \otimes {}_1 M_2$ acts non-trivially on some $v \in V_2$, it is easy to see that the sub-module generated by $v$ is isomorphic to $\mathbb{C} \mathbb{R}^2 \mathbb{C}$, and hence occurs as a direct summand. Similarly, if 
an element of $ {}_1 M_2 \otimes {}_2 M_1$ acts non-trivially on some $v \in V_1$, then the submodule generated by $v$ is isomorphic to $\mathbb{R} \mathbb{C} \mathbb{R}$, so is a summand. If there are no such elements then $V$ is a direct sum $V_1 \oplus V_2$, where $\tau|_{V_1} = 0$, and $\overline{\tau}|_{V_2} = 0$, and these are easy to analyze. 

In each irreducible component of $\Lambda(\bfv)$, the isomorphism class of the corresponding representation is constant on an open-dense set. The classes that show up this way are exactly the rigid representations: ones where no two indecomposables in the Krull-Schmidt decomposition admit a non-trivial extension. 
Hence the irreducible components of $\Lambda(\bfv)$ correspond to collections of indecomposables none of which are connected by lines in Figure \ref{fig:indecomposables}, whose total dimension in $\bfv$. For example, the number of Kostant partitions of $3 \alpha_1+2\alpha_2$ is 5, so the results above imply there must be 5 irreducible components of $\Lambda((3,2))$. The corresponding rigid modules are:  
\begin{equation}
\R \oplus \C\R^2\C, \ \ \C\R \oplus \C\R^2, \ \ \C\R \oplus \R\C\R, \ \  \R\C \oplus \R^2\C, \ \  \R\C \oplus \R\C\R.
\end{equation}
Unfortunately, $\Lambda$ usually has infinitely many isomorphism classes of indecomposable representations, even in finite type, so this method does not generalize.

\subsection{Deformed pre-projective algebra over ${\Bbb C}$ for $\widehat{\mathfrak{sl}}_2$} \label{ss:CCstrange}

Dlab and Ringel's construction need not agree with Lusztig's even when all the $\F_i$ are chosen to be $\C$. 
Such an example is given in \cite[\S 6]{Ringel:1998} for $\widehat{\mathfrak{sl}}_n$, $n \geq 3$. Here we give an example for $\widehat{\mathfrak{sl}}_2$.

Consider the graph where $I=\{1,2\}$ and $E$ consists of a single edge joining 1 and 2. Choose $\F_1, \F_2=\C$, and ${}_1 M_2= {}_2 M_1 = \C^2$, with the actions of both $\F_1$ and $\F_2$ being scalar multiplication on both bimodules. The corresponding Cartan matrix is of type $\widehat{\mathfrak{sl}}_2$. Define
\begin{equation}
 \begin{aligned}
 \epsilon_1^2:  &&  {}_1 M_2   \otimes_\C {}_2 M_1 & \rightarrow \C \\
&& \mbox{} \hspace{-0.2cm} (s_1, s_2)   \otimes (t_1, t_2) & \rightarrow s_1t_1+s_2t_2
 \end{aligned}
\ \
\text{and}  
 \ \
 \begin{aligned}
\epsilon_2^1:  &&  {}_2 M_1 \otimes_\C {}_1 M_2 & \rightarrow \C \\
&&  \mbox{} \hspace{-0.2cm}  (t_1, t_2) \otimes (s_1, s_2) & \rightarrow t_1s_1-t_2s_2.
 \end{aligned}
 \end{equation}
 
Fix a graded vector space $V=V_1 \oplus V_2$. For any $x \in \Lambda(V)$, consider the four maps
\begin{itemize}
\item $m_1 = {}_2 x_1((1,0) \otimes \cdot), \; m_2 = {}_2 x_1((0,1) \otimes \cdot)$ in $\Hom(V_1,V_2)$,

\item $\overline m_1 = {}_1 x_2((1,0) \otimes \cdot), \; \overline m_2 = {}_1 x_2((0,1) \otimes \cdot)$ in $\Hom(V_2,V_1)$. 

\end{itemize}
These determine $x$, and the preprojective relations are
\begin{equation} \label{eq:pppo}
 \om_1 m_1 -  \overline m_2m_2=0  \quad \text{and} \;\;\; m_1 \om_1 + m_2\om_2=0.
 \end{equation}
Take $V_1$ and $V_2$ to both be one dimensional, with bases $\{ v_1 \}, \{ v_2 \}$ respectively. Define 
\begin{equation}
\text{ $m_1=m_2$ is  the map which sends $v_1$ to $v_2$, } \;\; \text{ and } \;\; \text{ $\om_1=\om_2 = 0.$}
\end{equation}
These satisfy \eqref{eq:pppo}, so define a module for $\Lambda$.

Now take a second copy $V'$ of this module with basis vectors $v'_1, v'_2$. Any extension of $V$ by $V'$ is determined by $a,b,c,d \in \C$ defined by
\begin{equation}
m_1(v'_1) = v'_2+ av_2, \;\;\;  m_2(v'_1)=v'_2+ bv_2 , \;\;\;  \om_1(v'_2)=cv_1,  \; \text{ and } \; \; \om_2(v'_2)=dv_1.
\end{equation}
The preprojective relations give 
\begin{equation}
c-d=0 \quad \text{and} \quad c+d=0.
\end{equation}
The only solution is $c=d=0$, so any such extension has a two-dimensional head. 

A simple calculation shows that, for Lusztig's preprojective algebra of type $\widehat{\mathfrak{sl}}_2$, any indecomposable $V$ that fits into a short exact sequence
$0 \rightarrow S_2 \rightarrow V \rightarrow S_1 \rightarrow 0$
has a self-extension with a 1-dimensional head. 
Hence, with the above choices, Dlab and Ringel's preprojective algebra is not isomorphic to Lusztig's. It can still be used in our realization of $B(-\infty)$, so our results generalize existing literature even in symmetric types. 

Note also that Lemma \ref{lem:CBf} fails here if both modules are taken to be $V$, since
\begin{equation}
\text{\text{dim} Ext}^1(V, V) - 2 \text{\text{dim} Hom}(V, V)  + (\dim V, \dim V )
= 1-2+0 \neq 0.
\end{equation}

There is actually a family of preprojective algebras parameterized by $z \in {\Bbb C}^\times$ defined as above but with 
$\epsilon_2^1 = z t_1s_1-t_2s_2.$
If $z= -1$ this is Lusztig's preprojective algebra, but for all $z \neq -1$ the above argument shows that it is not. Hence Dlab and Ringel's construction can be thought of as non-trivially deforming Lusztig's preprojective algebra in this case. 

\section{Further directions} \label{sec:directions}
There are many possible future directions for this work. Essentially, for every result proven using or about Lusztig's quiver varieties, one can ask if it can be extended to our generality. Here we briefly discuss a few examples.

\subsection{Semi-canonical and canonical bases in symmetrizable types?}
Here we only consider the crystal $B(\infty)$, but in symmetric types one can also realize $U^-(\g)$ using a convolution product on constructible functions on Lusztig's quiver varieties. Can one extend this to our quiver varieties, and hence define semi-canonical bases in full generality? Even more ambitiously, can one use some version of perverse sheaves built from modulated quivers to study canonical bases in non-symmetric types? There seem to be some obstacles. One approach would be to modify our construction to work over algebraically closed fields, since many geometric techniques work better in that setting, but this would require some new ideas (for instance, Example \ref{2.7} shows that naively base-changing to ${\Bbb C}$ would give the wrong number of irreducible components).  Geiss-Leclerc-Schr\"oer \cite{GLS1, GLS2,GLS3} have another approach to non-symmetric preprojective algebras which works over algebraically closed fields, so perhaps the first step would be to better understand how our work is related to their construction. 

\subsection{Nakajima quiver varieties, and $B(\lambda)$ crystals.} In \cite{Sai02}, Saito uses  irreducible components of Nakajima's quiver varieties to realize the integrable highest weight crystals $B(\lambda)$ in symmetric types. It should be possible to extend this to non-symmetric types by extending our construction to include Nakajima's varieties. 

\subsection{Comparing with combinatorial realizations}
In types $A$ and $D,$ Savage \cite{Sav2} describes the relationship between certain combinatorial realizations of crystals and Kashiwara-Saito's geometric realization. He also considers some simply laced affine types. It would be interesting to extend this to types $B$ and $C$, as well as to non-symmetric affine types. 

\section*{Acknowledgements}

These ideas arose from discussions with Gordana Todorov, beginning at the 2010 Maurice Auslander Distinguished Lectures
and International Conference. We thank the organizers for that wonderful event, and Dr. Todorov for her contributions. We also thank Pavel Etingof, George Lusztig and Peter J McNamara for interesting comments. The second author was partially supported by NSF grants DMS-0902649 and DMS-1265555.


\begin{thebibliography}{amsalpha}

\bibitem[C-B00]{C-B} 
W. Crawley-Boevey. On the exceptional fibres of Kleinian singularities. {\it Amer. J. Math.} {\bf 122} (2000), no. 5, 1027--1037.

\bibitem[DR80]{DR80} V. Dlab, C. M. Ringel, The preprojective algebra of a modulated graph, Representation
theory II, Lecture Notes in Math 832, Springer, Berlin 1980, 216-231

\bibitem[Gab73]{Gab}
P. Gabriel.  Indecomposable representations. II, Symposia Mathematica, Vol. XI (Convegno di Algebra
Commutativa, INDAM, Rome, 1971), pp. 81--104. Academic Press, London, 1973.

\bibitem[GLSa]{GLS1} 
C. Geiss, B. Leclerc and J. Schr\"oer. 
Quivers with relations for symmetrizable Cartan matrices I: Foundations. Preprint, \arxiv{1410.1403}

\bibitem[GLSb]{GLS2} 
C. Geiss, B. Leclerc and J. Schr\"oer. 
Quivers with relations for symmetrizable Cartan matrices II: Change of symmetrizers.
Preprint, \arxiv{1511.05898}

\bibitem[GLSc]{GLS3} 
C. Geiss, B. Leclerc and J. Schr\"oer. 
Quivers with relations for symmetrizable Cartan matrices III: Convolution algebras. Preprint, \arxiv{1511.06216}


\bibitem[Kas93]{KasLit}
 M. Kashiwara.  \emph{The crystal base and {L}ittelmann's refined {D}emazure character
  formula}, {\it Duke Math. J.} \textbf{71} (1993), no.~3, 839--858.


\bibitem[Kas95]{Kashiwara:1995} M. Kashiwara. \emph{On crystal bases}, Representations of groups ({B}anff, {AB},
  1994), CMS Conf. Proc., vol.~16, Amer. Math. Soc., Providence, RI, 1995,
  pp.~155--197.


\bibitem[KS97]{KS97} M. Kashiwara and Y. Saito, Geometric construction of crystal bases. {\it Duke Math. J.} {\bf 89 } (1997), 9-36.
\arxiv{q-alg/9606009}

\bibitem[Lus91]{L91}
G.~Lusztig.
\newblock Quivers, perverse sheaves, and quantized enveloping algebras.
\newblock {\em J. Amer. Math. Soc.}, {\bf 4} (2):365--421, 1991.

\bibitem[Rin98]{Ringel:1998}
C. M. Ringel.
{\it The preprojective algebra of a quiver.} Algebras and modules, II (Geiranger, 1996), 467--480, 
CMS Conf. Proc., {\bf 24}, Amer. Math. Soc., Providence, RI, 1998. 

\bibitem[Sai02]{Sai02} Yoshihisa Saito, Crystal bases and quiver varieties. Mathematische Annalen, {\bf 324} (4):675--688, 2002. \arxiv{math/0111232}

\bibitem[Sav05]{Sav05} 
A. Savage. 
A geometric construction of crystal graphs using quiver varieties: extension to the non-simply laced case. Infinite-dimensional aspects of representation theory and applications, 
{\it Contemp. Math.}, {\bf 392,}  133--154, Amer. Math. Soc., Providence, RI, 2005. \arxiv{math/0406073}

\bibitem[Sav06]{Sav2} A. Savage, Geometric and combinatorial realizations of crystal graphs, Algebr. Represent. Theory 9 (2006), no. 2, 161--199 \arxiv{math/0310314}

\bibitem[sp]{sp} The stacks project, Chapter 5, Section 5.7. 
\href{http://stacks.math.columbia.edu/tag/004Z}{http://stacks.math.columbia.edu/tag/004Z}


\bibitem[TW16]{TW}
P. Tingley and B. Webster.
Mirkovi\'c-Vilonen polytopes and Khovanov-Lauda-Rouquier algebras. 
Compositio Mathematica {\bf152}, 1648-1696, 2016. 
 \arxiv{1210.6921}

\end{thebibliography}
\end{document}